\newcommand{\Cal}[1]{{\mathcal #1}}
\newtheorem{examples}{Examples}
\def\R{ \mathbb{R} }
\def\C{ \mathbb{C} }
\newcommand{\End}{\operatorname{End}}
\newcommand{\Hom}{\operatorname{Hom}}
\newcommand{\Der}{\operatorname{Der}}
\newcommand{\PreDer}{\operatorname{PreDer}}
\def\cleft{\hbox{[\kern-.16em\hbox{[}}}
\def\cright{\hbox{]\kern-.16em\hbox{]}}}
\newcommand{\PreL}{\mathsf{PreL}}
\newcommand{\AntiPreL}{\mathsf{AntiPreL}}
\DeclareMathOperator{\op}{op}
\DeclareMathOperator{\ad}{ad}
\DeclareMathOperator{\tr}{tr}
\DeclareMathOperator{\Spec}{Spec}
\begin{document}

\title*{Pre-Lie algebras, their multiplicative lattice, and idempotent endomorphisms}
% Use \titlerunning{Short Title} for an abbreviated version of
% your contribution title if the original one is too long
\author{Michela Cerqua and Alberto Facchini}
% Use \authorrunning{Short Title} for an abbreviated version of
% your contribution title if the original one is too long
\institute{Michela Cerqua \email{michela.cerqua@studenti.unipd.it}
\and Alberto Facchini \at Dipartimento di Matematica "Tullio Levi Civita", Universit\`a di Padova, 35121 Padova, Italy
\email{facchini@math.unipd.it}}
%
% Use the package "url.sty" to avoid
% problems with special characters
% used in your e-mail or web address
%
\maketitle

\abstract*{We introduce the notions of pre-morphism and pre-derivation for arbitrary non-associative algebras over a commutative ring $k$ with identity. These notions are applied to the study of pre-Lie $k$-algebras and, more generally, Lie-admissible $k$-algebras. Associating with any algebra $(A,\cdot)$ its sub-adjacent anticommutative algebra $(A,[-,-])$ is a functor  from the category of $k$-algebras with pre-morphisms to the category of anticommutative $k$-algebras. We describe the commutator of two ideals of a pre-Lie algebra, showing that the condition (Huq=Smith) holds for pre-Lie algebras. This allows to make use of all the notions concerning multiplicative lattices in the study of the multiplicative lattice of ideals of a pre-Lie algebra. We study idempotent endomorphisms of a pre-Lie algebra $L$, i.e., semidirect-product decompositions of $L$ and bimodules over $L$.}

\abstract{We introduce the notions of pre-morphism and pre-derivation for arbitrary non-associative algebras over a commutative ring $k$ with identity. These notions are applied to the study of pre-Lie $k$-algebras and, more generally, Lie-admissible $k$-algebras. Associating with any algebra $(A,\cdot)$ its sub-adjacent anticommutative algebra $(A,[-,-])$ is a functor  from the category of $k$-algebras with pre-morphisms to the category of anticommutative $k$-algebras. We describe the commutator of two ideals of a pre-Lie algebra, showing that the condition (Huq=Smith) holds for pre-Lie algebras. This allows to make use of all the notions concerning multiplicative lattices in the study of the multiplicative lattice of ideals of a pre-Lie algebra. We study idempotent endomorphisms of a pre-Lie algebra $L$, i.e., semidirect-product decompositions of $L$ and bimodules over $L$.}

\section*{Introduction}

The aim of this paper is to present pre-Lie algebras from the point of view of their multiplicative lattice of ideals, and to study their idempotent endomorphisms. Pre-Lie algebras were first introduced and studied in \cite{55} by Vinberg. He applied them to the study of convex homogenous
cones. He called ``left-symmetric algebras'' the algebras we call pre-Lie algebras in this paper. 

We present a notion of pre-morphism and pre-derivation for arbitrary non-associative algebras over a commutative ring $k$ with identity, and apply it to the study of pre-Lie $k$-algebras and, more generally, Lie-admissible $k$-algebras. Associating with any pre-Lie algebra $(A,\cdot)$ its sub-adjacent  Lie algebra $(A,[-,-])$ is a functor from the category $\PreL_{k,p}$ of pre-Lie $k$-algebras with pre-morphisms to the category of Lie $k$-algebras. 
We introduce the notion of module $M$ over a pre-Lie algebra $L$ and, like in the case of associative algebras, it is possible to do it in two equivalent ways, via a suitable scalar multiplication $L\times M\to M$ or as a $k$-module $M$ with a pre-morphism $\lambda\colon (L,\cdot)\to (\End(_kM),\circ)$. The category of modules over a pre-Lie $k$-algebra $(L,\cdot)$ is isomorphic to  the category of modules over its sub-adjacent  Lie $k$-algebra $(L,[-,-])$. We then consider the commutator of two ideals in a pre-Lie algebra. In particular we show that the condition (Huq=Smith) holds for pre-Lie algebras. With the notion of commutator at our disposal, the lattice of ideals of a pre-Lie algebra becomes a multiplicative lattice \cite{F, FFJ}. As a consequence we immediately get the notions of abelian pre-Lie algebra, prime ideal, prime spectrum of a pre-Lie algebra, solvable and nilpotent pre-Lie algebras, metabelian and hyperabelian pre-Lie algebras, centralizer, and center.

We then consider idempotent endomorphisms of a pre-Lie algebra, because they immediately show what semi-direct products of pre-Lie algebras are, what the action of a pre-Lie algebra on another pre-Lie  algebra is, and lead us to the notion of bimodule over a pre-Lie algebra. We study the ``Dorroh extensions'' of pre-Lie algebras. Like in the associative case, we get a category equivalence between the category $\PreL_{k}$ and the category of pre-Lie algebras with identity and with an augmentation. %Finally, we show that part of our techniques can be also applied to the study of anti-pre-Lie algebras, a class of algebras recently discovered in \cite{Bai}. 

	\section{Preliminary notions on non-associative $k$-algebras}\label{1}
	
	Let $k$ be a commutative ring with identity. In this article, a {\em $k$-algebra}  is a $k$-module $_kM$ with a further $k$-bilinear operation $M\times M\rightarrow M$, $(x,y)\mapsto xy$ (equivalently, a $k$-module morphism $M\otimes_kM\rightarrow M$). A {\em subalgebra} (an {\em ideal}, resp.) of $M$ is a $k$-submodule $N$ of $M$ such that $xy\in N$ for every $x,y\in N$ ($xn\in N$	and $nx\in N$ for every $x\in M$ and $n\in N$, resp.)	As usual, if $N$ is an ideal of $M$, the quotient $k$-module $M/N$ inherits a $k$-algebra structure. There is a one-to-one correspondence between the set of all ideals $N$ of $M$ and the set of all congruences on $M$, that is, all equivalence relations $\sim$ on $M$ for which $x\sim y$ and $z\sim w$ imply $x+z\sim y+w$, $\lambda x\sim\lambda y$ and $xz\sim yw$ for every $x,y,z,w\in M$ and every $\lambda\in k$. The {\em opposite} $M^{{\op}}$ of an algebra $M$ is defined taking as multiplication in $M^{\op}$ the mapping $(x,y)\mapsto yx$.

		If $M $ and $M '$ are two $k$-algebras, a $k$-linear mapping $\varphi\colon M \rightarrow M '$ is a {\em $k$-algebra homomorphism} if $\varphi(xy)=\varphi(x)\varphi(y)$ for every $x,y\in M $. Clearly, $k$-algebras form a variety in
the sense of Universal Algebra. Moreover, it is a variety of $\Omega$-groups, % \cite{26}, 
that is, a variety which is pointed (i.e., it has exactly one constant) and
has amongst its operations and identities those of the variety of groups.  It follows that $k$-algebras form a semiabelian category. Other examples of $\Omega$-groups are abelian groups, non-unital rings, 
commutative algebras, modules and Lie algebras.

If $M$ is any $k$-algebra, its endomorphisms form a monoid, that is, a semigroup with a two-sided identity, with respect to composition of mappings $\circ$. A {\em derivation} of a $k$-algebra $M$ is any $k$-linear mapping $D\colon M\to M$ such that $D(xy)=(D(x))y+x(D(y))$ for every $x,y\in M$. For any $k$-algebra $M$, we can construct the {\em $k$-algebra of derivations} $\Der_k(M)$ of the $k$-algebra $M$. Its elements are all derivations of $M$. If $M$ is any $k$-algebra and $D,D'$ are two derivations of $M$, then the composite mapping $DD'$ is not a derivation of $M$ in general, but $DD'-D'D$ is. Thus, for any $k$-algebra $M$, we can define the Lie $k$-algebra $\Der_k(M)$ as the subset of $\End(_kM)$ consisting of all derivations of $M$ with multiplication $[D,D']:=DD'-D'D$ for every $D,D'\in\Der_k(M)$.

	It is known that there is not a general notion of representation (or module) over our (non-associative) $k$-algebras. There is a notion of bimodule over a non-associative ring due to Eillenberg, and this notion works well for Lie algebras, but is not convenient in the study of Jordan algebras and alternative algebras. The situation, as far as modules are concerned, is the following.

\subsection{Modules over an associative $k$-algebra.}	Given any $k$-algebra $M$, we can consider, for every element $x\in M$, the mapping $\lambda_x\colon M\to M$, defined by $\lambda_x(a)=xa$ for every $a\in M$. The mapping $\lambda\colon M\rightarrow \mathrm{End}(_kM)$ is defined by $\lambda\colon x\mapsto\lambda_x$ for every $x\in~M$. This $\lambda$ is a $k$-algebra morphism if and only if $M$ is associative. Thus, for any associative $k$-algebra $M$, it is natural to define a {\em left $M$-module} as any $k$-module $_kA$ with a $k$-algebra homomorphism $\lambda\colon M\to\End(_kA)$. Similarly, we can define {\em right $M$-modules} as $k$-modules $_kA$ with a $k$-algebra antihomomorphism $\rho\colon M\to \End(_kA)$. Here by {\em $k$-algebra antihomomorphism} $\psi\colon M\to M'$ between two $k$-algebras $M,M'$ we mean any $k$-linear mapping $\psi$ such that $\psi(xy)=\psi(y)\psi(x)$ for every $x,y\in M$. Clearly, a mapping $ M\to M'$ is a $k$-algebra antihomomorphism if and only if it is a $k$-algebra homomorphism $M^{\op}\to M'$. It follows that right $M$-modules coincide with left $M^{\op}$-modules. More precisely, when we say that right $M$-modules coincide with left $M^{\op}$-modules, we mean that there is a canonical category isomorphism between the category of all right $M$-modules and the category of all left $M^{\op}$-modules. Similarly, left $M$-modules coincide with right $M^{\op}$-modules. Also, if $M$ is commutative, then left $M$-modules and right $M$-modules coincide. Finally, left modules $A$ over an associative $k$-algebra $M$ can be equivalently defined using, instead of the $k$-algebra homomorphism $\lambda\colon M\to\End(_kA)$, a $k$-bilinear mapping $\mu\colon M\times A\to A$, $\mu\colon (m,a)\mapsto ma$, such that $(mm')a=m(m'a)$ for every $m,m'\in M$ and $a\in A$.
		
\subsection{Modules over a Lie $k$-algebra.}	 For any $k$-module $A$ we will denote by $\mathfrak g\mathfrak l(A)$ the Lie $k$-algebra $\End(_kA)$ of all $k$-endomorphisms of $A$ with the operation $[-,-]$ defined by $[f,g]=fg-gf$. 

For any Lie $k$-algebra $M$ and any element $x\in M$, the mapping $\lambda_x$ is an element of the Lie $k$-algebra $\Der_k(M)$, usually called the {\em adjoint} of $x$, or the {\em inner derivation} defined by $x$, and usually denoted by $\ad_Mx$ instead of $\lambda_x$, and the mapping $\ad\colon M\to\Der_k(M)\subseteq \mathfrak g\mathfrak l(M)$, defined by $\ad\colon x\mapsto\ad_Mx$ for every $x\in M$, is a
 Lie $k$-algebra homomorphism. 

 {\em Left modules} over a Lie $k$-algebra $M$ are defined as  $k$-modules $A$ with a Lie $k$-algebra homomorphism 
$\lambda\colon M\to\mathfrak g\mathfrak l(A)$. Similarly, it is possible to define {\em right $M$-modules} as $k$-modules  
$A$ with a $k$-algebra antihomomorphism $\rho\colon M\to \mathfrak g\mathfrak l(A)$. But any Lie $k$-algebra 
$M$ is isomorphic to its opposite algebra $M^{\op}$ via the isomorphism $M\to M^{\op}$, $x\mapsto-x$. 
It follows that the category of right $M$-modules is canonically isomorphic to the category of left 
$M$-modules for any Lie $k$-algebra $M$. Therefore it is useless to introduce both right and left modules, 
it is sufficient to introduce left $M$-modules and call them simply ``$M$-modules''. %We will only briefly need right modules over a Lie algebra in Section~\ref{anti}.
			
	\section{Pre-Lie $k$-algebras }\label{3}
	
			A \emph{pre-Lie $k$-algebra} is a $k$-algebra $(M,\cdot)$ satisfying the identity
		\begin{equation}
			(x\cdot y)\cdot z-x\cdot(y\cdot z)=(y\cdot x)\cdot z-y\cdot(x\cdot z)\label{321}
		\end{equation}
		for every $x,y,z\in M$.
			
					For any $k$-algebra $(M,\cdot)$, defining the commutator $[x,y]=x\cdot y-y\cdot x$ for every $x,y\in M$, the algebra $(M,[-,-])$ is anticommutative. If $(M,\cdot)$ is a pre-Lie algebra, one gets that $(M,[-,-])$ is a Lie algebra, called the {\em Lie algebra sub-adjacent} to the pre-Lie algebra $(M,\cdot)$. 

\medskip

	Pre-Lie algebras are also called Vinberg algebras or left-symmetric algebras. This last name refers to the fact that in (\ref{321}) one exchanges the first two variables on the left. A \emph{right-symmetric algebra} is an algebra in which, for every $x,y,z \in M$, 
		$(x\cdot y)\cdot z-x\cdot(y\cdot z)=(x\cdot z)\cdot y-x\cdot(z\cdot y)$.		It is easily seen that the category of left-symmetric algebras and the category of right-symmetric algebras are isomorphic (the categorical isomorphism is given by $M\mapsto M^{\op}$).

\begin{examples} {\rm  (1) Every associative algebra is clearly a pre-Lie algebra. 

(2) {\em Derivations on $k[x_1,\dots ,x_n]^n$.}
Let $k$ be a commutative ring with identity, $n\geq 1$ be an integer, and $k[x_1,\dots ,x_n]$ be the ring of polynomials in the $n$ indeterminates $x_1,\dots ,x_n$ with coefficients in $k$. Let $A$ be the free $k[x_1,\dots ,x_n]$-module $k[x_1,\dots ,x_n]^n$ with free set $\{e_1,\dots ,e_n\}$ of generators. As a $k$-module, $A$ is the free $k$-module with free set of generators the set $\{\,x_1^{i_1}\dots x_n^{i_n}e_j \mid i_1,\dots ,i_n\geq 0,\  j=1,\dots ,n\}$. Consider the usual derivations of the ring $k[x_1,\dots ,x_n]$: 
$$ \frac{\partial}{\partial x_j}(x_1^{i_1}\dots x_n^{i_n}) = 
\begin{cases}
	x_1^{i_1}\dots i_jx_j^{i_j-1}\dots x_n^{i_n}  & \text{for  $i_j> 0$},\\
	0\ & \text{for $i_j=0$.}
\end{cases} $$
Define a multiplication on $A$ setting, for every $u=(u_1,\dots ,u_n),v=(v_1,\dots ,v_n)\in~A$,
$$ v\cdot u=(\sum_{j=1}^{n}v_j\frac{\partial u_1}{\partial x_j},\dots ,\sum_{j=1}^{n}v_j\frac{\partial u_n}{\partial x_j}). $$ It is then possible to see that $A$ is a pre-Lie $k$-algebra \cite[Section~2.3]{tesi}.

\bigskip

(3) {\em An example of rank $2$.} Let $k$ be any commutative ring with identity and $L\cong k\oplus k$ a free $k$-module of rank $2$ with free set $\{e_1,e_2\}$ of generators. Define a multiplication on $L$ setting $e_1e_1=2e_1,\ e_1e_2=e_2,\ e_2e_1=0,\ e_2e_2=e_1$, and extending by $k$-bilinearity. Then $L$ is a pre-Lie $k$-algebra \cite{NAVW}.

\bigskip

(4) 
{\em Rooted trees.}
Recall that a \emph{tree} is an undirected graph in which any two vertices are connected by exactly one path, or equivalently a connected acyclic undirected graph.
	A \emph{rooted tree} of degree $n$ is a pair $(T,r)$, where $T$ is a tree with $n$ vertices, and its  \emph{root} $r$ is a vertex of $T$. In the following we will label the vertices of $T$ with the numbers $1,\dots ,n$, and the root $r$ with $1$.
	
Let $k$ be a commutative ring with identity and $\mathcal{T}_{n}$ be the free $k$-module with free set of generators the set of all isomorphism classes of rooted trees of degree $n$. Set $$\mathcal{T}:=\bigoplus_{n\geq 1}\mathcal{T}_n.$$

Define a multiplication on $\mathcal{T}$ setting, for every pair $T_1,T_2$ of rooted trees, $$T_1\cdot T_2=\sum_{v\in V(T_2)}T_1\circ_vT_2,$$ where $V(T_2)$ is the set of vertices of $T_2$, and $T_1\circ_vT_2$ is the rooted tree obtained by adding to the disjoint union of $T_1$ and $T_2$ a further new edge joining the root vertex of $T_1$ with the vertex $v$ of $T_2$. The root of $T_1\circ_vT_2$ is defined to be the same as the root of $T_2$. To get a multiplication on $\mathcal{T}$, extend  this multiplication  by $k$-bilinearity.

Let us give an example. Suppose
\[T_1=\vcenter{\hbox{\begin{tikzpicture}
			\node[circle,draw] {$1$}
			child {node[circle,draw] {$2$}}
			child {node[circle,draw] {$3$}};
\end{tikzpicture}}} \quad \mbox{and } \quad
T_2=\vcenter{\hbox{\begin{tikzpicture}
			\node[circle,draw] {$1$}
			child {node[circle,draw] {$2$}};
\end{tikzpicture}}}\]
Then \[T_1\circ_1T_2=\vcenter{\hbox{\begin{tikzpicture}
			\node[circle,draw] {$1$}
			child {node[circle,draw] {$2$}}
			child {node[circle,draw] {$3$}
					child {node[circle,draw] {$4$}}
					child {node[circle,draw] {$5$}}
				};
\end{tikzpicture}}} \quad \mbox{and } \quad
T_1\circ_2T_2=\vcenter{\hbox{\begin{tikzpicture}
			\node[circle,draw] {$1$}
			child {node[circle,draw] {$2$}
					child {node[circle,draw] {$3$}
						child {node[circle,draw] {$4$}}
						child {node[circle,draw] {$5$}}
				}
			};
\end{tikzpicture}}},\]
where we have relabelled the vertices of $T_1$. (If $T_1$ has $n$ vertices and $T_2$ has $m$ vertices, it is convenient to relabel in $T_1\circ_vT_2$ the vertices $1,\dots,n$ of $T_1$ with the numbers $m+1,\dots,m+n$, respectively.) Therefore
\[T_1\cdot T_2=
\vcenter{\hbox{\begin{tikzpicture}
			\node[circle,draw] {$1$}
			child {node[circle,draw] {$2$}}
			child {node[circle,draw] {$3$}
				child {node[circle,draw] {$4$}}
				child {node[circle,draw] {$5$}}
			};
\end{tikzpicture}}}
+
\vcenter{\hbox{\begin{tikzpicture}
			\node[circle,draw] {$1$}
			child {node[circle,draw] {$2$}
				child {node[circle,draw] {$3$}
					child {node[circle,draw] {$4$}}
					child {node[circle,draw] {$5$}}
				}
			};
\end{tikzpicture}}}\]

In this way, one gets a pre-Lie $k$-algebra $\mathcal{T}$ \cite{CL, tesi}. It is a graded $k$-algebra because $\mathcal{T}_{n}\cdot\mathcal{T}_{m}\subseteq \mathcal{T}_{n+m}$ for every $n$ and $m$. It can be proved that this is the free pre-Lie $k$-algebra on one generator \cite{CL}. (The free generator of $\mathcal{T}$ is the rooted tree with one vertex.)

\medskip

(5) {\em Upper triangular matrices.} This is an interesting example taken from \cite{NAVW}, where all the details can be found. Let $k$ be a commutative ring with identity in which $2$ is invertible, and $n$ be a fixed positive integer. Let $M$ be the $k$-algebra of all $n\times n$ matrices, and $U$ be the its subalgebra of upper triangular matrices. Let $\varphi\colon M\to U$ be the the $k$-linear mapping that associates with any matrix $A=(a_{ij})\in M$ the matrix $B=(b_{ij})\in U$, where $b_{ij}=a_{ij}$ if $a_{ij}$ is above the main diagonal, $b_{ij}=0$ if $a_{ij}$ is below the main diagonal, and $b_{ii}=a_{ii}/2$ if $a_{ij}=a_{ii}$ is on the main diagonal. Also, for every $A\in M$, let $A^{\tr}$ be the transpose of the matrix $A$. Define an operation $\cdot$ on $U$ setting, for every $X,Y\in U$, $X\cdot Y:=XY+\varphi(XY^{\tr}+YX^{\tr})$. Then $(U,\cdot)$ is a pre-Lie $k$-algebra.}
\end{examples}

As we have defined in Section \ref{1}, a $k$-algebra homomorphism $\varphi\colon M \rightarrow M'$ is a $k$-module morphism such that $\varphi(xy)=\varphi(x)\varphi(y)$ for every $x,y\in M$. But we also need another notion. We say that a $k$-module morphism $\varphi\colon M \rightarrow M'$, where $M,M'$ are arbitrary (not-necessarily associative) $k$-algebras, is a {\em pre-morphism} if $\varphi(xy)-\varphi(x)\varphi(y)=\varphi(yx)-\varphi(y)\varphi(x)$ for every $x,y\in M$. 

\begin{lemma}\label{nuovo} A mapping $\varphi\colon M\to M'$, where $(M,\cdot),(M',\cdot)$ are arbitrary $k$-algebras, is a pre-morphism $(M,\cdot)\to (M',\cdot)$ if and only if it is a $k$-algebra morphism $(M,[-,-])\to (M',[-,-])$.\end{lemma}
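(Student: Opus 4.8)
The plan is to unwind both conditions to pointwise identities and observe they are literally the same equation. First I would recall that for any $k$-algebra the bracket is defined by $[x,y]=xy-yx$, so that $\varphi$ being a $k$-algebra morphism $(M,[-,-])\to(M',[-,-])$ means precisely that $\varphi([x,y])=[\varphi(x),\varphi(y)]$ for all $x,y\in M$ — recall also that $\varphi$ is assumed $k$-linear in the definition of pre-morphism, so there is nothing extra to check there. Then I would expand the right-hand side: $[\varphi(x),\varphi(y)]=\varphi(x)\varphi(y)-\varphi(y)\varphi(x)$, and the left-hand side using $k$-linearity of $\varphi$: $\varphi([x,y])=\varphi(xy-yx)=\varphi(xy)-\varphi(yx)$.

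Putting these together, the morphism condition $\varphi([x,y])=[\varphi(x),\varphi(y)]$ becomes
\begin{equation*}
\varphi(xy)-\varphi(yx)=\varphi(x)\varphi(y)-\varphi(y)\varphi(x),
\end{equation*}
and rearranging the terms gives exactly $\varphi(xy)-\varphi(x)\varphi(y)=\varphi(yx)-\varphi(y)\varphi(x)$, which is the defining condition of a pre-morphism. Since every step is an equivalence (valid for all $x,y$ simultaneously), this establishes both implications at once.

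Honestly there is no real obstacle here: the statement is essentially a reformulation, and the only thing to be careful about is making explicit that a pre-morphism is by definition already $k$-linear, so the two notions agree on the underlying $k$-module morphism and one only needs to match the multiplicative conditions. I would present the computation as a short displayed chain of equivalences and note that it holds for arbitrary (not necessarily associative, not necessarily pre-Lie) $k$-algebras $M,M'$, which is why the lemma is stated at this level of generality — it will later let us regard $(A,\cdot)\mapsto(A,[-,-])$ as a functor out of the category of $k$-algebras with pre-morphisms.
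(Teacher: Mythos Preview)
Your proof is correct and follows essentially the same approach as the paper: both simply rewrite the pre-morphism identity $\varphi(xy)-\varphi(x)\varphi(y)=\varphi(yx)-\varphi(y)\varphi(x)$ as $\varphi(xy)-\varphi(yx)=\varphi(x)\varphi(y)-\varphi(y)\varphi(x)$, which is exactly $\varphi([x,y])=[\varphi(x),\varphi(y)]$. Your added remark that $k$-linearity is already built into both definitions is a helpful clarification the paper leaves implicit.
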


\begin{proof} If $(M,\cdot)$, $(M',\cdot)$ are $k$-algebras and $\varphi\colon M \rightarrow M'$ is a mapping, then $$\varphi\colon (M,\cdot)\to (M',\cdot)$$ is a pre-morphism if and only if  $\varphi(ab)-\varphi(a)\varphi(b)=\varphi(ba)-\varphi(b)\varphi(a)$ for every $a,b\in M$. This equality can be re-written as $\varphi(ab)-\varphi(ba)=\varphi(a)\varphi(b)-\varphi(b)\varphi(a)$, that is, $\varphi([a,b])=[\varphi(a),\varphi(b)]$. \end{proof}

From this lemma and the definition of pre-morphism, we immediately get that:

\begin{lemma}\label{222} {\rm (a)} Every $k$-algebra morphism is a pre-morphism.

{\rm (b)} The composite mapping of two pre-morphisms is a pre-morphism.

{\rm (c)} The inverse mapping of a bijective pre-morphism is a pre-morphism. \end{lemma}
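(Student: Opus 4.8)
The plan is to reduce everything to Lemma~\ref{nuovo}, which identifies pre-morphisms $(M,\cdot)\to(M',\cdot)$ with ordinary $k$-algebra morphisms $(M,[-,-])\to(M',[-,-])$ between the associated anticommutative algebras. Once we have this dictionary, all three assertions become statements about ordinary algebra morphisms, which are already known.

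For part (a): a $k$-algebra morphism $\varphi\colon (M,\cdot)\to(M',\cdot)$ satisfies $\varphi(xy)=\varphi(x)\varphi(y)$, so in particular $\varphi(xy)-\varphi(x)\varphi(y)=0=\varphi(yx)-\varphi(y)\varphi(x)$, which is precisely the pre-morphism condition; alternatively, a morphism of $(M,\cdot)$ is a morphism of $(M,[-,-])$ since $\varphi([x,y])=\varphi(xy-yx)=\varphi(x)\varphi(y)-\varphi(y)\varphi(x)=[\varphi(x),\varphi(y)]$, and then Lemma~\ref{nuovo} applies. For part (b): if $\varphi\colon M\to M'$ and $\psi\colon M'\to M''$ are pre-morphisms, then by Lemma~\ref{nuovo} they are $k$-algebra morphisms $(M,[-,-])\to(M',[-,-])$ and $(M',[-,-])\to(M'',[-,-])$ respectively; the composite $\psi\varphi$ is then a $k$-algebra morphism $(M,[-,-])\to(M'',[-,-])$ (composites of algebra morphisms are algebra morphisms, a triviality), hence a pre-morphism $(M,\cdot)\to(M'',\cdot)$ by Lemma~\ref{nuovo} again. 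For part (c): if $\varphi\colon M\to M'$ is a bijective pre-morphism, then by Lemma~\ref{nuovo} it is a bijective $k$-algebra morphism $(M,[-,-])\to(M',[-,-])$; its inverse $\varphi^{-1}$ is a $k$-linear bijection and satisfies $\varphi^{-1}([\varphi(a),\varphi(b)])=\varphi^{-1}(\varphi([a,b]))=[a,b]$ for all $a,b$, so $\varphi^{-1}$ is a $k$-algebra morphism $(M',[-,-])\to(M,[-,-])$, hence a pre-morphism $(M',\cdot)\to(M,\cdot)$ by Lemma~\ref{nuovo}.

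There is essentially no obstacle here: the content has been absorbed into Lemma~\ref{nuovo}, and what remains is the observation that the class of $k$-algebra morphisms is closed under identities, composition, and inversion of bijections — which is immediate from the definition of morphism in any variety of algebras. The one minor point to state cleanly is that the pre-morphism condition written out, $\varphi(xy)-\varphi(x)\varphi(y)=\varphi(yx)-\varphi(y)\varphi(x)$, rearranges to $\varphi(xy-yx)=\varphi(x)\varphi(y)-\varphi(y)\varphi(x)$, i.e.\ $\varphi([x,y])=[\varphi(x),\varphi(y)]$, which is exactly the proof of Lemma~\ref{nuovo} and may simply be cited. Accordingly the written proof will be only a few lines, invoking Lemma~\ref{nuovo} three times and the corresponding elementary facts about morphisms of anticommutative $k$-algebras.
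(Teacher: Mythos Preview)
Your proposal is correct and follows exactly the paper's approach: the paper simply states that the lemma follows immediately from Lemma~\ref{nuovo} and the definition of pre-morphism, which is precisely what you have spelled out in detail.
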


In Section \ref{1}, we already considered, for  any (not-necessarily  associative) $k$-algebra $M$,  the mapping $\lambda\colon M\rightarrow \End(_kM)$, where $\lambda\colon x\mapsto\lambda_x$, $\lambda_x\colon M\to M$, and $\lambda_x(a)=xa$. Also, we had already remarked that this mapping $\lambda$ is a $k$-algebra morphism if and only if $M$ is associative. The mapping $\lambda$ is a pre-morphism if and only if $M$ is a pre-Lie algebra.

\medskip

There is a category of $k$-algebras with pre-morphisms, i.e., a category in which objects are $k$-algebras and the $\Hom$-set of all morphisms $M\to M'$ consists of all pre-morphisms $M\to M'$. This category contains as a full subcategory the category $\PreL_{k,p}$
of 
pre-Lie $k$-algebras (with pre-morphisms). The category $\PreL_{k,p}$ contains as a subcategory the category $\PreL_{k}$ of pre-Lie algebras with $k$-algebra morphisms, hence a fortiori the category of associative algebras with their morphisms.

\medskip

From lemma \ref{nuovo}, we get 

\begin{theorem}\label{7.1} Associating with any $k$-algebra $(A,\cdot)$ its sub-adjacent  anticommutative algebra $(A,[-,-])$ is a functor $U$ from the category of $k$-algebras with pre-morphisms to the category of anticommutative $k$-algebras.\end{theorem}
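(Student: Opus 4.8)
The plan is to verify the three ingredients that make $U$ a functor: that it is well-defined on objects, well-defined on morphisms, and compatible with identities and composition.

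First I would check that $U$ is well-defined on objects. Given a $k$-algebra $(A,\cdot)$, the operation $[x,y] := x\cdot y - y\cdot x$ is $k$-bilinear because $\cdot$ is, and it is visibly anticommutative since $[x,y] = -[y,x]$; no associativity or Jacobi-type hypothesis is needed here, so $(A,[-,-])$ is indeed an object of the category of anticommutative $k$-algebras. (This is exactly the observation already recorded in Section~\ref{3} for arbitrary $k$-algebras.)

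Next I would define $U$ on morphisms: send a pre-morphism $\varphi\colon (A,\cdot)\to (A',\cdot)$ to the same underlying $k$-module map $\varphi\colon (A,[-,-])\to (A',[-,-])$. The content here is precisely Lemma~\ref{nuovo}, which states that $\varphi$ is a pre-morphism $(A,\cdot)\to(A',\cdot)$ if and only if it is a $k$-algebra morphism $(A,[-,-])\to(A',[-,-])$; so this assignment lands in the correct $\Hom$-set. Finally, functoriality is immediate: since $U$ does not change the underlying map, $U(\id_A) = \id_{U(A)}$ and $U(\psi\circ\varphi) = \psi\circ\varphi = U(\psi)\circ U(\varphi)$, composition of maps being composition of maps regardless of which algebra structure we remember. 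Hence $U$ is a functor.

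There is essentially no obstacle: the only nontrivial point — that pre-morphisms are exactly the morphisms of sub-adjacent anticommutative algebras — has already been dispatched in Lemma~\ref{nuovo}. The remaining verifications (bilinearity and anticommutativity of $[-,-]$, preservation of identities and composition) are formal, precisely because $U$ acts as the identity on underlying $k$-modules and maps.

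\begin{proof}
For any $k$-algebra $(A,\cdot)$, the bracket $[x,y]=x\cdot y-y\cdot x$ is $k$-bilinear and anticommutative, so $(A,[-,-])$ is an anticommutative $k$-algebra; this defines $U$ on objects. For a pre-morphism $\varphi\colon(A,\cdot)\to(A',\cdot)$, Lemma~\ref{nuovo} says that $\varphi$, viewed as a map of the underlying $k$-modules, is a $k$-algebra morphism $(A,[-,-])\to(A',[-,-])$; we set $U(\varphi)$ to be this map. Since $U$ leaves the underlying $k$-module map unchanged, it preserves identities and composition: $U(\id_A)=\id_{U(A)}$ and $U(\psi\circ\varphi)=\psi\circ\varphi=U(\psi)\circ U(\varphi)$ for composable pre-morphisms $\varphi,\psi$. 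Hence $U$ is a functor from the category of $k$-algebras with pre-morphisms to the category of anticommutative $k$-algebras.
\end{proof}
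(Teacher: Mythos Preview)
Your proof is correct and follows exactly the paper's approach: the paper presents Theorem~\ref{7.1} as an immediate consequence of Lemma~\ref{nuovo} (``From lemma~\ref{nuovo}, we get''), and you have simply spelled out the routine verifications (well-definedness on objects, preservation of identities and composition) that the paper leaves implicit.
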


%\begin{proof} If $(A,\cdot)$, $(A',\cdot)$ are pre-Lie $k$-algebras and $\varphi\colon A \rightarrow A '$ is a pre-morphism, then $\varphi(ab)-\varphi(a)\varphi(b)=\varphi(ba)-\varphi(b)\varphi(a)$ can be re-written as $\varphi(ab)-\varphi(ba)=\varphi(a)\varphi(b)-\varphi(b)\varphi(a)$, that is, $\varphi([a,b])=[\varphi(a),\varphi(b)]$. Thus for any pre-morphism $\varphi\colon A \rightarrow A '$ of pre-Lie algebras, the same mapping $\varphi$ is a morphism of Lie algebras $\varphi:=U(\varphi)\colon (A,[-,-])\to (A',[-,-])$.\end{proof}

Notice that the functor $U$, viewed as a functor from the category $\PreL_{k,p}$ to the category of Lie $k$-algebras, is fully faithful. Two pre-Lie algebras $A,A'$ are isomorphic in $\PreL_{k,p}$ if and only if their sub-adjacent Lie algebras are isomorphic Lie algebras. Two pre-Lie algebras isomorphic in $\PreL_{k,p}$ are not necessarily isomorphic as pre-Lie algebras. The simplest example is, over the field $\R$ of real numbers, the example of the two $\R$-algebras $\R\times\R$ and $\C$. They are non-isomorphic associative commutative $2$-dimensional $\R$-algebras, so that their sub-adjacent Lie algebras are both the $2$-dimensional abelian Lie $\R$-algebra. Hence $\R\times\R$ and $\C$ are isomorphic objects in $\PreL_{\R,p}$. All $\R$-linear mappings $\R\times\R\to\C$ are pre-morphisms.

\begin{remark} More generally, a $k$-algebra $A$ is said to be {\em Lie-admissible} if, setting $[x,y]=xy-yx$, one gets a Lie algebra $(A,[-,-])$. If the {\em associator }of a $k$-algebra~$A$ is defined as
$(x, y, z) = (xy)z -x(yz)$ for all $x,y,z$ in $A$, then being a pre-Lie algebra is equivalent to $(x, y, z) = (y, x, z)$ for all $x,y,z\in A$. Being a Lie-admissible algebra is equivalent to \begin{equation}(x, y, z) + (y, z, x) + (z, x, y) = (y, x, z)  + (x, z, y)+ (z, y, x)\label{Liea}\end{equation} for every $x,y,z\in A$. Pre-Lie algebras are Lie-admissible algebras. By lemma~\ref{nuovo}, the functor $U\colon (A,\cdot)\mapsto (A,[-,-])$ is a fully faithful functor from the category of Lie-admissible $k$-algebras with pre-morphisms to the category of Lie $k$-algebras.\end{remark}

Corresponding to the notion of pre-morphism, there is a notion of pre-derivation. We say that a $k$-module endomorphism $\delta\colon M \rightarrow M$, where $M$ is an arbitrary (not-necessarily associative) $k$-algebra, is a {\em pre-derivation} if $$\delta(xy)-\delta(x)y-x\delta(y)=\delta(yx)-\delta(y)x-y\delta(x)$$ for every $x,y\in M$. 

\begin{lemma} \label{vhip}Let $k$ be a commutative ring with identity, $(A,\cdot)$ a $k$-algebra, and $[-,-]\colon$\linebreak $A\times A\to A$ the operation on $A$ defined by $[x,y]:=xy-yx$ for every $x,y\in A$. Then a $k$-module endomorphism $\delta$ of $A$ is a pre-derivation of $(A,\cdot)$ if and only if it is a derivation of the $k$-algebra $(A,[-,-])$.\end{lemma}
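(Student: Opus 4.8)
The plan is to mimic exactly the proof of Lemma \ref{nuovo}, but at the level of derivations rather than morphisms. First I would write out the defining condition for $\delta$ to be a pre-derivation of $(A,\cdot)$, namely
\[
\delta(xy)-\delta(x)y-x\delta(y)=\delta(yx)-\delta(y)x-y\delta(x)
\]
for all $x,y\in A$, and then rearrange it by collecting the $\delta$-of-a-product terms on one side and the remaining terms on the other: this turns the equation into
\[
\delta(xy)-\delta(yx)=\bigl(\delta(x)y-y\delta(x)\bigr)+\bigl(x\delta(y)-\delta(y)x\bigr).
\]
The left-hand side is $\delta([x,y])$ by definition of $[-,-]$, and the right-hand side is $[\delta(x),y]+[x,\delta(y)]$. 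Hence the pre-derivation condition is literally equivalent to $\delta([x,y])=[\delta(x),y]+[x,\delta(y)]$ for all $x,y\in A$, which is precisely the Leibniz identity defining a derivation of $(A,[-,-])$.

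Since $\delta$ is assumed to be a $k$-module endomorphism in both formulations, there is nothing further to check: $k$-linearity is common to the two notions, and the only remaining axiom in each case is the multiplicative/Leibniz-type identity just shown to be equivalent. So the two equivalences together give the statement. I would present this in one short displayed computation plus a sentence, in the same style as the proof of Lemma \ref{nuovo}.

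There is essentially no obstacle here; the only thing to be careful about is the bookkeeping of signs when regrouping the six terms, in particular noticing that $-\delta(x)y$ and $+y\delta(x)$ combine into $-[\delta(x),y]$ with the correct orientation (and symmetrically for the $\delta(y)$ terms), so that the final identity comes out as $\delta([x,y])=[\delta(x),y]+[x,\delta(y)]$ and not with a spurious sign. Once that is checked the proof is a one-liner.
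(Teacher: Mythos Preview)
Your proof is correct and is essentially the same as the paper's own proof: both simply rewrite the pre-derivation identity $\delta(xy)-\delta(x)y-x\delta(y)=\delta(yx)-\delta(y)x-y\delta(x)$ as $\delta([x,y])=[\delta(x),y]+[x,\delta(y)]$. Your version is a little more explicit about the sign regrouping, but the argument is identical.
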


\begin{proof} The $k$-module endomorphism $\delta$ of $A$ is a pre-derivation of $(A,\cdot)$ if and only if $\delta(xy)-\delta(x)y-x\delta(y)=\delta(yx)-\delta(y)x-y\delta(x)$, that is, $\delta([x,y])=[\delta(x), y]+[x,\delta(y)]$. \end{proof}

\begin{proposition} {\rm (a)} Every derivation of a $k$-algebra is a pre-derivation.

{\rm (b)} If $\delta$ and $\delta'$ are two pre-derivations of a $k$-algebra $A$, then $[\delta,\delta']:=\delta\circ\delta'-\delta'\circ\delta$ is a pre-derivation. \end{proposition}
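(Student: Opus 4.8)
The plan is to reduce both parts to the already-established correspondence between pre-derivations of $(A,\cdot)$ and derivations of the sub-adjacent anticommutative algebra $(A,[-,-])$ given by Lemma~\ref{vhip}. For part~(a), if $D$ is a derivation of $(A,\cdot)$, then in particular $D(xy)-D(x)y-xD(y)=0=D(yx)-D(y)x-yD(x)$ for all $x,y\in A$, so the pre-derivation identity holds trivially (both sides vanish). One could equally invoke Lemma~\ref{vhip} together with the fact that a derivation of $(A,\cdot)$ restricts to a derivation of $(A,[-,-])$, but the direct one-line observation is cleaner.

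For part~(b), the idea is to translate the statement through Lemma~\ref{vhip}: since $\delta$ and $\delta'$ are pre-derivations of $(A,\cdot)$, they are derivations of the $k$-algebra $(A,[-,-])$; it then suffices to show that $[\delta,\delta']=\delta\circ\delta'-\delta'\circ\delta$ is again a derivation of $(A,[-,-])$, because Lemma~\ref{vhip} will then give back that $[\delta,\delta']$ is a pre-derivation of $(A,\cdot)$. The key step is therefore the classical fact that the commutator of two derivations of an arbitrary $k$-algebra is again a derivation. Concretely, for $x,y\in A$ one computes
\[
(\delta\delta')([x,y]) = \delta\bigl([\delta'(x),y]+[x,\delta'(y)]\bigr) = [\delta\delta'(x),y]+[\delta'(x),\delta(y)]+[\delta(x),\delta'(y)]+[x,\delta\delta'(y)],
\]
and the analogous expansion for $(\delta'\delta)([x,y])$; subtracting, the two cross terms $[\delta'(x),\delta(y)]+[\delta(x),\delta'(y)]$ cancel against their counterparts, leaving exactly $[[\delta,\delta'](x),y]+[x,[\delta,\delta'](y)]$, which is the derivation identity for $[\delta,\delta']$ on $(A,[-,-])$.

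I do not anticipate any serious obstacle here: the whole argument is a formal consequence of Lemma~\ref{vhip} plus the well-known ``commutator of derivations is a derivation'' computation, which requires only bilinearity of $[-,-]$ and no use of anticommutativity or the Jacobi identity. The only point deserving a word of care is making explicit that Lemma~\ref{vhip} is used twice, once in each direction: first to pass from pre-derivations of $(A,\cdot)$ to derivations of $(A,[-,-])$, and then, after checking the commutator is a derivation of $(A,[-,-])$, to pass back to a pre-derivation of $(A,\cdot)$. No restriction on $k$ (such as invertibility of $2$) is needed.
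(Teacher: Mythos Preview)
Your proposal is correct and follows essentially the same approach as the paper: the paper's proof simply states that (a) is trivial and that (b) follows from Lemma~\ref{vhip}, relying implicitly on the fact (recalled in Section~\ref{1}) that the commutator of two derivations of any $k$-algebra is again a derivation. You have just unpacked these two sentences in full detail.
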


\begin{proof} (a) is trivial,
and
(b) follows from lemma~\ref{vhip}.\end{proof}

\begin{corollary} For any $k$-algebra $A$, the set $\PreDer_k(A)$ of all pre-derivations of $A$ is a Lie $k$-algebra with the operation $[-,-]$ defined by $[\delta,\delta']:=\delta\circ\delta'-\delta'\circ\delta$ for every $\delta,\delta'\in \PreDer_k(A)$.\end{corollary}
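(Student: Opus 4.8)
The plan is to deduce the Corollary directly from the Proposition together with Lemma~\ref{vhip}, with no fresh computation needed. First I would invoke Lemma~\ref{vhip}: it identifies $\PreDer_k(A)$ with $\Der_k(A,[-,-])$ as a set of $k$-module endomorphisms of $A$. Then I would recall from Section~\ref{1} that for \emph{any} $k$-algebra $B$ the collection $\Der_k(B)$ of derivations is closed in $\End(_kB)$ under the bracket $[D,D']=DD'-D'D$, and forms a Lie $k$-algebra; applying this with $B=(A,[-,-])$ gives that $\Der_k(A,[-,-])$ is a Lie $k$-algebra under $[\delta,\delta']=\delta\circ\delta'-\delta'\circ\delta$. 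Transporting this structure back along the equality of sets from Lemma~\ref{vhip} yields the claim.

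More explicitly, the three things to verify are: (i) $\PreDer_k(A)$ is a $k$-submodule of $\End(_kA)$ — this is immediate since the pre-derivation condition $\delta(xy)-\delta(x)y-x\delta(y)=\delta(yx)-\delta(y)x-y\delta(x)$ is $k$-linear in $\delta$, so sums and scalar multiples of pre-derivations are pre-derivations; (ii) $\PreDer_k(A)$ is closed under $[-,-]$ — this is exactly part~(b) of the Proposition; (iii) the bracket $[-,-]$ satisfies antisymmetry and the Jacobi identity on $\PreDer_k(A)$ — but $[-,-]$ satisfies these on all of $\End(_kA)$ (indeed $(\End(_kA),[-,-])=\mathfrak{g}\mathfrak{l}(A)$ is a Lie $k$-algebra, as recalled in Section~\ref{1}), hence \emph{a fortiori} on the $k$-submodule $\PreDer_k(A)$ which, by (i) and (ii), is a subalgebra of $\mathfrak{g}\mathfrak{l}(A)$.

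There is essentially no obstacle here: the corollary is a bookkeeping consequence of facts already established. The only point requiring a word of care is that one should note $[-,-]$ is well defined as an operation $\PreDer_k(A)\times\PreDer_k(A)\to\PreDer_k(A)$, i.e.\ that the bracket of two pre-derivations lands back in $\PreDer_k(A)$ rather than merely in $\End(_kA)$; this is precisely Proposition~(b), which in turn rests on Lemma~\ref{vhip} and the corresponding closure property of ordinary derivations. Once that is in place, the Lie-algebra axioms are inherited from $\mathfrak{g}\mathfrak{l}(A)$, and the proof is complete.
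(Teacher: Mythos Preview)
Your proposal is correct and follows essentially the same approach as the paper: the paper's proof is the single sentence ``The $k$-algebra $(\PreDer_k(A),[-,-])$ is the Lie algebra of all derivations of the $k$-algebra $(A,[-,-])$ (lemma~\ref{vhip})'', and your argument simply unpacks this by making explicit the $k$-submodule check and the inheritance of the Lie axioms from $\mathfrak{g}\mathfrak{l}(A)$.
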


\begin{proof} The $k$-algebra $(\PreDer_k(A),[-,-])$ is the Lie algebra of all derivations of the $k$-algebra $(A,[-,-])$ (lemma~\ref{vhip}).\end{proof}

\begin{proposition}\label{2.9} Let $(A,\cdot)$ be any $k$-algebra. For every $x\in A$ define a $k$-module morphism $d_x\colon A\to A$ setting $d_x(y):=xy-yx$ for every $y\in A$. 
The following conditions are equivalent:

{\rm (a)} $d_x$ is a pre-derivation for all $x\in A$, that is, the image $d(A)$ of the mapping $d\colon A\to\End(_kA)$ is contained in $\PreDer_k(A)$.

{\rm (b)} The mapping $d$ is a pre-morphism of the $k$-algebra $(A,\cdot)$ into the associative $k$-algebra $(\End(_kA),\circ)$.

{\rm (c)} The $k$-algebra $(A,\cdot)$ is Lie-admissible.\end{proposition}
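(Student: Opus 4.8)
The plan is to prove the equivalence by establishing both $\mathrm{(a)}\Leftrightarrow\mathrm{(c)}$ and $\mathrm{(b)}\Leftrightarrow\mathrm{(c)}$, so that $\mathrm{(a)}$ and $\mathrm{(b)}$ are each pinned directly to the Lie-admissibility identity~(\ref{Liea}). The common computational core is the same: unwind what each of the three conditions says as an identity in three variables $x,y,z$, expand everything in terms of the associator $(u,v,w)=(uv)w-u(vw)$, and compare with (\ref{Liea}).

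First I would handle $\mathrm{(a)}\Leftrightarrow\mathrm{(c)}$. By lemma~\ref{vhip}, $d_x$ is a pre-derivation of $(A,\cdot)$ if and only if $d_x$ is a derivation of $(A,[-,-])$, i.e.\ $d_x([y,z])=[d_x(y),z]+[y,d_x(z)]$ for all $y,z$. Since $d_x(y)=[x,y]$ in the algebra $(A,[-,-])$, this last equation is exactly $[x,[y,z]]=[[x,y],z]+[y,[x,z]]$, which (after moving one term) is the Jacobi identity for $(A,[-,-])$ evaluated at $(x,y,z)$. Quantifying over all $x$, condition $\mathrm{(a)}$ holds iff $(A,[-,-])$ satisfies the Jacobi identity, i.e.\ iff $(A,[-,-])$ is a Lie algebra; since $[-,-]$ is anticommutative by construction, this is precisely condition $\mathrm{(c)}$. (One still has to observe that $\mathrm{(a)}$ as literally stated — $d_x\in\PreDer_k(A)$ for all $x$ — is the same as the image $d(A)$ lying in $\PreDer_k(A)$, which is immediate.)

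Next, $\mathrm{(b)}\Leftrightarrow\mathrm{(c)}$. By lemma~\ref{nuovo}, $d\colon(A,\cdot)\to(\End(_kA),\circ)$ is a pre-morphism iff it is a $k$-algebra morphism $(A,[-,-])\to(\End(_kA),[-,-])=\mathfrak{gl}(A)$, i.e.\ iff $d_{[x,y]}=d_x\circ d_y-d_y\circ d_x$ for all $x,y$. Applying both sides to an arbitrary $z\in A$ turns this into the identity $[[x,y],z]=[x,[y,z]]-[y,[x,z]]$ in $(A,[-,-])$, which is again the Jacobi identity at $(x,y,z)$; quantifying over all $x,y,z$ gives that $\mathrm{(b)}$ holds iff $(A,[-,-])$ is Lie, i.e.\ iff $\mathrm{(c)}$. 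Alternatively, and perhaps more cleanly for the write-up, one can note that $d$ is literally the composite of $\lambda\colon A\to\End(_kA)$, $x\mapsto\lambda_x$, with the difference against $\rho$, or simply invoke that $d=\lambda-\rho$ where $\rho_x(a)=ax$; but the route through lemma~\ref{nuovo} is shortest.

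I would present the argument by first noting that all three conditions reduce, via lemmas~\ref{nuovo} and~\ref{vhip}, to statements about $(A,[-,-])$: $\mathrm{(c)}$ says $(A,[-,-])$ is Lie, $\mathrm{(a)}$ says every $\mathrm{ad}_x=d_x$ is a derivation of $(A,[-,-])$, and $\mathrm{(b)}$ says $\mathrm{ad}=d$ is a Lie-algebra homomorphism into $\mathfrak{gl}(A)$ — and for any anticommutative algebra these three properties are mutually equivalent and each equivalent to the Jacobi identity. The only thing to watch is the bookkeeping identifying ``$d_x$ is a derivation of $(A,[-,-])$ for all $x$'' and ``$d$ is a homomorphism into $\mathfrak{gl}(A)$'' with Jacobi; I do not expect a genuine obstacle here, just the need to be careful that the displayed three-variable identities really are the Jacobi identity (possibly after reindexing and using anticommutativity), rather than something weaker. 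One could also give a fully explicit associator computation establishing $\mathrm{(c)}\Leftrightarrow$ (\ref{Liea}) directly and then show $\mathrm{(a)}$ and $\mathrm{(b)}$ unwind to the same three-variable relation, but routing through the two lemmas already available makes the proof essentially a two-line observation.
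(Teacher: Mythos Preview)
Your argument is correct, and for $\mathrm{(a)}\Leftrightarrow\mathrm{(c)}$ it is essentially identical to the paper's: both invoke lemma~\ref{vhip} to translate ``$d_x$ is a pre-derivation'' into ``$d_x=\ad_x$ is a derivation of $(A,[-,-])$'', and then observe that this is the Jacobi identity at $(x,y,z)$.

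For $\mathrm{(b)}\Leftrightarrow\mathrm{(c)}$ you take a slightly different route. The paper works directly with the pre-morphism condition $d_{xy}-d_x\circ d_y=d_{yx}-d_y\circ d_x$, evaluates at $z$, expands everything in terms of the original product $\cdot$, and then checks that the resulting three-variable identity is literally the associator identity~(\ref{Liea}). You instead apply lemma~\ref{nuovo} to convert the pre-morphism condition into ``$d\colon(A,[-,-])\to\mathfrak{gl}(A)$ is a Lie-algebra morphism'', i.e.\ $d_{[x,y]}=[d_x,d_y]$, and evaluate at $z$ to land on the Jacobi identity $[[x,y],z]=[x,[y,z]]-[y,[x,z]]$. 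Your route is a bit more conceptual and avoids the explicit associator bookkeeping; the paper's route has the small advantage of exhibiting condition~(\ref{Liea}) on the nose rather than via the equivalent Jacobi form. Either way the content is the same, and your caution about checking that the displayed identities really are Jacobi (up to anticommutativity) is well placed but causes no trouble here.
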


\begin{proof} (a)${}\Leftrightarrow{}$(c) The mapping $d_x\colon (A,\cdot)\to (A,\cdot)$ is a pre-derivation if and only if the mapping $d_x\colon (A,[-,-])\to (A,[-,-])$ is a derivation by lemma~\ref{vhip}, i.e., if and only if $d_x([y,z])=[d_x(y),z]+[y,d_x(z)]$. Since the mapping $d_x$ is defined by $d_x(y)=[x,y]$, this is equivalent to  $[x,[y,z]]=[[x,y],z]+[y,[x,z]]$, for every $x,y,z\in A$. This proves that $d_x$ is a pre-derivation for every $x\in A$ if and only if $(A,[-,-])$ is a Lie algebra, that is, if and only if $(A,\cdot)$ is Lie-admissible.

(b)${}\Leftrightarrow{}$(c)  The mapping $d$ is a pre-morphism if and only if $d_{xy}-d_x\circ d_y=d_{yx}-d_y\circ d_x$ for every $x,y\in A$, that is, if and only if $d_{xy}(z)-d_x( d_y(z))=d_{yx}(z)-d_y(d_x(z))$ for every $x,y,z\in A$. This is equivalent to $(xy)z-z(xy)-d_x(yz-zy)=(yx)z-z(yx)-d_y(xz-zx)$. An easy calculation shows that this is exactly Condition~(\ref{Liea}), i.e., it is equivalent to the fact that $A$ is Lie-admissible.\end{proof}

If $A$ is a Lie-admissible $k$-algebra, the mapping $d_x$ is the {\em inner pre-derivation} of $A$ induced by $x$.

\section{Pre-Lie algebras are modules over the sub-adjacent  Lie algebra}\label{xxx}

Now we want to give another presentation of pre-Lie algebras, helpful to understand their structure. 

Let $k$ be a commutative ring with identity. Given a pre-Lie $k$-algebra 	$(A,\cdot)$, we have already seen in the paragraph after Lemma~\ref{222}
that the mapping $\lambda\colon (A,\cdot)\to \End(_kA)$ is a pre-morphism. Apply to it the functor $U$, getting a Lie $k$-algebra morphism $L:=U(\lambda)\colon (A,[-,-])\rightarrow\mathfrak{gl}(A)$ defined by $L\colon a\mapsto \lambda_{a}$ for every $a\in A$. This mapping $L$ is set-theoretically equal to the mapping $\lambda$. In other words, $L$ defines a module structure on the $k$-module $_kA$, giving it the structure of a module over the sub-adjacent  Lie $k$-algebra $(A,[-,-])$. Moreover, $[x,y]=L(x)(y)-L(y)(x)$.

This construction can be inverted. Let $(A,[-,-])$ be a Lie $k$-algebra, and suppose that its sub-adjacent  $k$-module $_kA$ has a module structure over the Lie algebra $(A,[-,-])$  via the Lie algebra morphism $L\colon (A,[-,-])\rightarrow\mathfrak{gl}(A)$ and that, for every $x,y\in A$, the condition $L(x)(y)-L(y)(x)=[x,y]$ holds. Define a new multiplication $\cdot$ on  $A$ setting $x\cdot y=L(x)(y)$ for every $x,y\in A$. Then $(A,\cdot)$ turns out to be a pre-Lie $k$-algebra. These two constructions are one the inverse of the other. More precisely, fix a Lie $k$-algebra $A$. Then there is a category isomorphism between the following two categories $\Cal S_A$ and $\Cal M_A$, where:

(1) $\Cal S_A$ is the category whose objects are all pre-Lie $k$-algebras $(A,\cdot)$ whose sub-adjacent  Lie algebra is the fixed Lie algebra $(A,[-,-])$. The morphisms are all pre-Lie algebra homomorphisms between such pre-Lie algebras.

(2) $\Cal M_A$  is the category whose objects are all pre-Lie $k$-algebra morphisms $L\colon (A,[-,-])\rightarrow\mathfrak{gl}(A)$ such that $L(x)(y)-L(y)(x)=[x,y]$ for every $x,y\in A$. The morphisms $\varphi:L\rightarrow L'$ between two objects $L,L'$ of $\Cal M_A$  are the $k$-module morphisms $\varphi\colon A\to A$ for which all diagrams 
\[\begin{tikzcd}
	M \arrow[r, "\varphi"] \arrow[d, "L(a)"'] & M \arrow[d, "L'(\varphi(a))"] \\
	M \arrow[r, "\varphi"]                    & M                            
\end{tikzcd} \]
commute, for every $a\in A$. See \cite[Theorem~1.2.7]{tesi}.

\bigskip

\subsection{Modules over a pre-Lie $k$-algebra.}\label{yyy}	 Modules cannot be defined over arbitrary non-associative algebras, but the definition of pre-Lie algebra immediately suggests us how it is possible to define modules over a pre-Lie algebra.

A {\em module} $M$ over a pre-Lie $k$-algebra $A$ is any $k$-module $M$ with a $k$-bilinear mapping $\cdot\colon A\times M\to M$ such that
		\begin{equation}
			(x\cdot y)\cdot m-x\cdot(y\cdot m)=(y\cdot x)\cdot m-y\cdot(x\cdot m)\label{123}
		\end{equation}
		for every $x,y\in A$ and $m\in M$.
		
		\medskip
		
		Like in the case of associative algebras, it is possible to equivalently define a module $M$ over a pre-Lie $k$-algebra $(A,\cdot)$ as any $k$-module $M$ with a pre-morphism $\lambda\colon (A,\cdot)\to (\End(_kM),\circ)$.
		
		\medskip

		For instance, if $A$ is any pre-Lie $k$-algebra and $I$ is an ideal of $I$, taking as $k$-bilinear mapping $\cdot\colon A\times I\to I$ the restriction of the multiplication on $A$, one sees immediately that $I$ is a module over $A$.
		
\begin{theorem}\label{same modules} The category of modules over a pre-Lie $k$-algebra $(A,\cdot)$ and the category of modules over its sub-adjacent  Lie $k$-algebra $(A,[-,-])$ are isomorphic.\end{theorem}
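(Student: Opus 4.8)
The plan is to exhibit a concrete isomorphism of categories by unwinding both definitions to the same data. A module over the pre-Lie algebra $(A,\cdot)$ is, by the equivalent description given just above, a $k$-module $M$ together with a pre-morphism $\lambda\colon (A,\cdot)\to(\End(_kM),\circ)$; a module over the Lie algebra $(A,[-,-])$ is a $k$-module $M$ together with a Lie algebra homomorphism $\mu\colon (A,[-,-])\to\mathfrak{gl}(M)$. Since $\mathfrak{gl}(M)$ is by definition $(\End(_kM),[-,-])$ with $[f,g]=fg-gf$, Lemma~\ref{nuovo} says precisely that a $k$-module morphism $A\to\End(_kM)$ is a pre-morphism $(A,\cdot)\to(\End(_kM),\circ)$ if and only if it is a $k$-algebra morphism $(A,[-,-])\to\mathfrak{gl}(M)$. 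So on objects the two classes of structures are literally the same set of pairs $(M,\lambda)$, and the assignment is the identity on underlying data.

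Next I would check that the notion of morphism is the same in both categories. A morphism of pre-Lie $A$-modules $f\colon (M,\lambda)\to(M',\lambda')$ is a $k$-module map with $f(x\cdot m)=x\cdot f(m)$ for all $x\in A$, $m\in M$, i.e. $f\circ\lambda(x)=\lambda'(x)\circ f$ for all $x\in A$; this is exactly the condition for $f$ to be a morphism of Lie $(A,[-,-])$-modules, since the scalar multiplication on both sides is $x\cdot m=\lambda(x)(m)=\mu(x)(m)$ and the two actions $\lambda$ and $\mu$ coincide set-theoretically. Thus I define functors $F\colon \mathsf{Mod}\text{-}(A,\cdot)\to\mathsf{Mod}\text{-}(A,[-,-])$ and $G$ in the opposite direction, both acting as the identity on underlying $k$-modules, on the structure maps, and on morphisms. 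It is then immediate that $F$ and $G$ are mutually inverse: $GF$ and $FG$ are the respective identity functors, not merely naturally isomorphic to them.

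There is essentially no obstacle here; the only point requiring a small verification is that the first "equivalent definition" of a pre-Lie module—via the bilinear map $\cdot\colon A\times M\to M$ satisfying \eqref{123}—really does match the pre-morphism description, i.e. that \eqref{123} for $\lambda_x:=x\cdot(-)$ is equivalent to $\lambda$ being a pre-morphism into $(\End(_kM),\circ)$. This is the exact analogue of the computation, recorded in Section~\ref{3}, that $\lambda\colon A\to\End(_kA)$ is a pre-morphism iff $A$ is pre-Lie, now carried out with $M$ in place of $A$ on the target side: writing out $\lambda_{xy}-\lambda_x\circ\lambda_y=\lambda_{yx}-\lambda_y\circ\lambda_x$ applied to $m\in M$ gives precisely $(x\cdot y)\cdot m - x\cdot(y\cdot m) = (y\cdot x)\cdot m - y\cdot(x\cdot m)$. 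Having noted this, the theorem reduces to Lemma~\ref{nuovo} plus the trivial observation that the morphism conditions coincide, and I would present the proof in two short paragraphs along exactly these lines.
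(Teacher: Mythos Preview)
Your proof is correct and follows essentially the same approach as the paper: both reduce the statement to Lemma~\ref{nuovo} by identifying objects as pairs $(M,\lambda)$ where $\lambda$ is simultaneously a pre-morphism $(A,\cdot)\to(\End(_kM),\circ)$ and a Lie algebra morphism $(A,[-,-])\to\mathfrak{gl}(M)$. Your version is more thorough in explicitly verifying that the morphism conditions coincide and that the bilinear-map and pre-morphism descriptions of a module agree, whereas the paper leaves these implicit.
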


\begin{proof} %Let $M$ be a module over a pre-Lie $k$-algebra $A$ and let $\cdot\colon A\times M\to M$ be its multiplication. This is a $k$-bilinear mapping, hence it induces a $k$-linear mapping $\lambda\colon A\to\End(_kM)$, $\lambda\colon a\to\lambda_a$, $\lambda_a(m)=a\cdot m$ for every $m\in M$. We have already seen that this mapping $\lambda$ is a pre-morphism of the pre-Lie algebra $A$ into the associative algebra $\End(M_k)$. Apply the functor $U$ to $\lambda$, getting a Lie algebra morphism $L=U(\lambda)\colon (A,[-,-])\rightarrow\mathfrak{gl}(M)$ defined by $L\colon a\mapsto =\lambda_{a}$ (The mapping $L$ is set-theoretically equal to the mapping $\lambda$.). Then $L$ defines a module structure on the $k$-module $M_k$, giving it the structure of a module over the sub-adjacent  Lie $k$-algebra $(A,[-,-])$. For further details, see \cite[Proposition~1.3.4]{tesi}.
Modules over the pre-Lie algebra $(A,\cdot)$ are pairs $(_kM,\lambda)$ with $_kM$ a $k$-module and $\lambda\colon A\to\End(_kM)$ a pre-morphism, and modules over the Lie algebra $(A,[-,-])$ are pairs $(_kM,\lambda)$ with $_kM$ a $k$-module and $\lambda\colon (A,[-,-])\to\mathfrak{gl}(M)$ a Lie $k$-algebra morphism. By Lemma~\ref{nuovo}, they are the same pairs.
\end{proof}

Notice that we could have obtained the results in Section \ref{xxx} in a different way: every pre-Lie algebra is clearly a module over itself, hence, applying Theorem~\ref{same modules}, to every pre-Lie algebra $(A,\cdot)$ there corresponds a module $A_k$ over the sub-adjacent  Lie algebra $(A,[-,-])$, that is, a Lie algebra morphism $L\colon (A,[-,-])\to \mathfrak{gl}(A)$, and $[x,y]=L(x)(y)-L(y)(x)$ for every $x,y\in A$.

Also notice that the modules we have defined in this section over a pre-Lie algebra are left modules. We don't consider right modules because the definition of pre-Lie algebra is not right/left symmetric, that is, the opposite of a pre-Lie algebra is not a pre-Lie algebra.

\section{Commutator of two ideals. (Huq=Smith) for pre-Lie algebras}\label{4}

The sum of two ideals of a pre-Lie $k$-algebra $A$, i.e., their sum as $k$-submodules of $A$, is an ideal of $A$, and any  intersection of a family of ideals of $A$ is an ideal of $A$. It follows that the set $\Cal I(A)$ of all ideals of a pre-Lie algebra $A$ is a complete lattice with respect to $\subseteq$, and it is a sublattice of the lattice of all $k$-submodules of $A_k$, hence $\Cal I(A)$ is a modular lattice. Moreover, the ideal of $A$ generated by a subset $X$ of $A$ is the intersection of all the ideals of $A$ that contain $X$.

We now need a notion of commutator of two ideals of a pre-Lie algebra. The variety $\mathcal{V}$ of pre-Lie $k$-algebras is a Barr-exact category, is a variety of $\Omega$-groups, is protomodular and is semi-abelian \cite[Example~(2)]{JMT}. More precisely, pre-Lie algebras have an underlying group structure with respect to their addition, so that they have the Mal'tsev term $p(x,y,z)=x-y+z$. See \cite[Proposition~5.3.1]{BB}. Notice that $p(p(x,y,0),x,y))=0$ for every $x,y\in A$, hence the variety $\mathcal{V}$ of pre-Lie algebras is protomodular by  \cite[Proposition~3.1.8]{BB}.  Moreover, $p$ has the property that $p(p(x,y,t),t,z)=p(x,y,z)$ for all $x,y,z,t\in A$ {\em (semi-associativity)}, so $\mathcal{V}$ is semi-abelian by  \cite[Proposition~5.3.3]{BB}.

We want to show that  the Huq and the Smith commutators of two ideals of a pre-Lie $k$-algebra coincide. Recall that in the case of the semi-abelian variety $\mathcal{V}$ of pre-Lie algebras, the Huq commutator of two ideals $I$ and $J$ of a pre-Lie algebra $A$ is the smallest ideal $[I,J]_H$ of $A$ for which there is a well-defined canonical morphism $I\times J\to A/[I,J]_H$ such that $(i,0)\mapsto i+[I,J]_H$ and $(0,j)\mapsto j+[I,J]_H$ for every $i\in I$ and $j\in J$. That is, $[I,J]_H$  is the smallest ideal of $A$ for which the mapping $I\times J\to A/[I,J]_H$, defined by $(i,j)\mapsto i+j+[I,J]_H$  for every $i\in I$ and $j\in J$, is a pre-Lie algebra morphism. 

\begin{proposition} The Huq commutator $[I,J]_H$ of two ideals $I$ and $J$ of a pre-Lie algebra $A$ is the ideal of $A$ generated by the subset $\{\, ij, ji\mid i\in I, \ j\in J\,\}$.\end{proposition}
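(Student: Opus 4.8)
The plan is to let $K$ denote the ideal of $A$ generated by $\{\, ij,\ ji\mid i\in I,\ j\in J\,\}$ and to show directly that $K$ is the \emph{smallest} ideal $N$ of $A$ for which the map $\psi_N\colon I\times J\to A/N$, $(i,j)\mapsto i+j+N$, is a pre-Lie algebra homomorphism; by the description of the Huq commutator recalled just before the statement, this gives $[I,J]_H=K$. Here $I\times J$ carries its product pre-Lie algebra structure, which is the one with componentwise operations since pre-Lie $k$-algebras form a variety (of $\Omega$-groups); and $I,J$ are sub\emph{algebras} of $A$ (being ideals), so the product $(i,j)\cdot(i',j')=(ii',jj')$ again lies in $I\times J$.

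First I would observe that $\psi_N$ is automatically a $k$-module morphism for every ideal $N$, so the only condition at stake is multiplicativity. Computing both sides: $\psi_N\bigl((i,j)\cdot(i',j')\bigr)=ii'+jj'+N$, while $\psi_N(i,j)\cdot\psi_N(i',j')=(i+j)(i'+j')+N=ii'+ij'+ji'+jj'+N$ by $k$-bilinearity of the multiplication of $A$. Hence $\psi_N$ is a pre-Lie algebra homomorphism if and only if $ij'+ji'\in N$ for all $i,i'\in I$ and $j,j'\in J$. Specializing $i'=0$ gives $ij'\in N$ for all $i\in I$, $j'\in J$, and specializing $j'=0$ gives $ji'\in N$ for all $j\in J$, $i'\in I$; conversely these two families of containments clearly imply $ij'+ji'\in N$. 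Thus $\psi_N$ is a pre-Lie algebra homomorphism if and only if $N$ contains the generating set $\{\,ij,\ ji\mid i\in I,\ j\in J\,\}$ of $K$, i.e.\ if and only if $K\subseteq N$.

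From this equivalence both halves are immediate. Taking $N=K$ shows $\psi_K$ is a pre-Lie algebra homomorphism, whence $[I,J]_H\subseteq K$; and for any ideal $N$ making $\psi_N$ a pre-Lie algebra homomorphism we get $K\subseteq N$, so in particular $K\subseteq[I,J]_H$. (Incidentally this reproves that the family of ideals $N$ with $\psi_N$ a homomorphism has a least element, so the Huq commutator exists here without appeal to the general semi-abelian machinery; and since $I$ and $J$ are ideals we have $ij,ji\in I\cap J$, hence $K\subseteq I\cap J$, as any commutator ought to be.)

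I do not expect a genuine obstacle: the content is the one-line bilinearity computation above. The only points deserving care are bookkeeping ones — that the product on $I\times J$ is the componentwise one (so $(i,j)\cdot(i',j')=(ii',jj')$), and that the ``canonical morphism'' appearing in the definition of $[I,J]_H$ is precisely $\psi_N$ — and both are already spelled out in the paragraph preceding the statement.
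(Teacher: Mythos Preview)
Your proof is correct and follows essentially the same approach as the paper: both reduce the question to the multiplicativity of the map $(i,j)\mapsto i+j+N$, compute that this holds iff $ij'+ji'\in N$ for all $i,i'\in I$ and $j,j'\in J$, and conclude. The only difference is cosmetic: the paper ends with ``the conclusion follows immediately'' after reaching the condition $ij'+ji'\in N$, whereas you explicitly specialize $i'=0$ and $j'=0$ to separate out $ij'\in N$ and $ji'\in N$, making the equivalence with $K\subseteq N$ fully transparent.
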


\begin{proof}The mapping $\bar{\sigma}\colon I\times J\rightarrow A/[I,J]_H$, defined by $(i,j)\mapsto i+j+[I,J]_H$, is a pre-Lie $k$-algebra morphism if and only if it respects multiplication, that is, if and only if $\bar{\sigma}((i,j)\cdot(i',j'))\equiv\bar{\sigma}(i,j)\bar{\sigma}(i',j')$ for every $(i,j),(i',j')\in I\times J$, that is, if and only if $ii'+jj'\equiv(i+j)(i'+j')$ modulo $[I,J]_H$. Hence $\bar{\sigma}$ is a pre-Lie algebra morphism if and only if $ij'+ji'\equiv0$ modulo $[I,J]_H$, i.e., if and only if $ij'+ji'\in[I,J]_H$. The conclusion follows immediately.\end{proof}

The {\em Smith commutator}  in the Mal'tsev variety $\mathcal{V}$ (see \cite{15}) can be defined, for a pre-Lie $k$-algebra $A$ with
Mal'tsev term $p(x, y, z)$ and two ideals $I,J$ of $A$, as the smallest ideal $[I,J]_S$ of $A$ for which the function
$$p \colon\{(x, y, z) \mid x\equiv y \pmod I,\ y\equiv z \pmod J\} \to A/[I,J]_S,$$ defined by $p
 (x, y, z) =x-y+z+[I,J]_S$ is a pre-Lie algebra morphism.

\begin{theorem} The Smith commutator $[I,J]_S$ of two ideals $I$ and $J$ of a pre-Lie algebra $A$ is the ideal of $A$ generated by the subset $\{\, ij, ji\mid i\in I, \ j\in J\,\}$. Hence Huq=Smith for pre-Lie algebras. \end{theorem}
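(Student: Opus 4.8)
The plan is to show that the ideal $K$ of $A$ generated by $\{\,ij,ji\mid i\in I,\ j\in J\,\}$ --- which by the preceding Proposition is exactly the Huq commutator $[I,J]_H$ --- is also the smallest ideal making the Smith map a pre-Lie morphism, so that $K=[I,J]_S$ as well. Concretely I would prove the two inclusions $[I,J]_S\subseteq K$ and $K\subseteq[I,J]_S$ directly: the first by exhibiting the required morphism, the second by a short evaluation argument. (The chain $[I,J]_H=K\subseteq[I,J]_S$ can alternatively be read off the general fact that the Huq commutator is contained in the Smith commutator in any semi-abelian variety, but the direct argument is just as quick.)

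First I would describe the domain $D=\{(x,y,z)\in A^3\mid x\equiv y\pmod I,\ y\equiv z\pmod J\}$ concretely: every element of $D$ is uniquely of the form $(y+i,\,y,\,y+j)$ with $y\in A$, $i\in I$, $j\in J$, and since $I$ and $J$ are ideals a one-line check (the first-minus-second component of a product of two elements of $D$ lies in $I$, the third-minus-second lies in $J$) shows that $D$ is a pre-Lie subalgebra of the product algebra $A^3$. The Smith map is then $\bar p\colon D\to A/K$, $(y+i,y,y+j)\mapsto y+i+j+K$; it is visibly $k$-linear, so the only point is multiplicativity.

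The heart of the matter is the bilinear computation. Expanding $(y+i)(y'+i')$, $yy'$ and $(y+j)(y'+j')$ componentwise and feeding the result to $\bar p$ gives $yy'+yi'+iy'+ii'+yj'+jy'+jj'+K$, whereas $\bar p(y+i,y,y+j)\cdot\bar p(y'+i',y',y'+j')$ is that same element plus $ij'+ji'$. Since $ij'\in K$ and $ji'\in K$, the two coincide in $A/K$, so $\bar p$ is a pre-Lie morphism and hence $[I,J]_S\subseteq K$ by minimality of the Smith commutator. For the reverse inclusion, if $K'$ is any ideal for which $p\colon D\to A/K'$ is a morphism, then applying $p$ to the identities $(i,0,0)\cdot(0,0,j)=(0,0,0)$ and $(0,0,j)\cdot(i,0,0)=(0,0,0)$, which hold in $D$, forces $ij\in K'$ and $ji\in K'$ for all $i\in I$, $j\in J$, so $K\subseteq K'$; taking $K'=[I,J]_S$ gives $K\subseteq[I,J]_S$. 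Together with the preceding Proposition this yields $[I,J]_S=K=[I,J]_H$.

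I do not expect a genuine obstacle here. The only step requiring care is the bilinear computation, which is mere bookkeeping of the nine cross-terms and the observation that precisely the mixed terms $ij'$ and $ji'$ fail to cancel and that these lie in $K$. It is worth noting that the pre-Lie identity (\ref{321}) is not used in the computation itself; it enters only through the ambient fact that pre-Lie algebras form a variety, which is what makes $A^3$ and $A/K$ pre-Lie algebras and $D$ a pre-Lie subalgebra of $A^3$.
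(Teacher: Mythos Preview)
Your proposal is correct and follows essentially the same route as the paper: both arguments reduce to the single bilinear computation showing that $\bar p$ is multiplicative if and only if $ij'+ji'$ lies in the target ideal for all $i,i'\in I$, $j,j'\in J$. The paper compresses the two inclusions into one ``iff'' statement (and tacitly uses that setting $i'=0$ or $j'=0$ recovers the individual generators $ij'$ and $ji'$), whereas you unpack them into $[I,J]_S\subseteq K$ via the explicit morphism and $K\subseteq[I,J]_S$ via the neat evaluation on $(i,0,0)\cdot(0,0,j)=(0,0,0)$; but the content is the same.
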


\begin{proof}
	The mapping $p\colon\{\,(b+i,b,b+j)\mid b\in A,\ i\in I,\ j\in J\,\}\rightarrow A/[I,J]_S$ is a pre-Lie algebra morphism if and only if for every $b,b'\in A$, $i,i'\in I$, $j,j'\in J$, one has $$p((b+i,b,b+j)(b'+i',b',b'+j'))\equiv p(b+i,b,b+j)p(b'+i',b',b'+j') \pmod{[I,J]_S},$$
	that is, $p((b+i)(b'+i'),bb',(b+j)(b'+j'))\equiv(b+i+j)(b'+i'+j')\;\mathrm{mod}[I,J]_S$. Equivalently, if and only if $0\equiv ij'+ji'\;\mathrm{mod}[I,J]_S$. Therefore the Smith commutator $[I,J]_S$ of the two ideals $I$ and $J$  is the ideal of $A$ generated by the subset $\{\, ij, ji\mid i\in I, \ j\in J\,\}$. In particolar, $[I,J]_H=[I,J]_S$.
\end{proof}

From now on we will not distinguish between the Huq commutator $[I,J]_H$ and the Smith commutator $[I,J]_S$. We will simply call it the {\em commutator} of the two ideals $I$ and $J$. Notice that the commutator is commutative, in the sense that $[I,J]=[J,I]$. 

Let us briefly discuss the structure of this ideal $[I,J]$. It is clear that if $X$ is any subset of a pre-Lie $k$-algebra $A$, the ideal $\langle X\rangle$ of $A$ generated by $X$, that is, the intersection of all the ideals of $A$ that contain $X$, can be also described as the union $\langle X\rangle=\bigcup_{n\ge 0} X_n$ of the following ascending chain $X_0\subseteq X_1\subseteq\dots$ of $k$-submodules of $A$: $X_0$ is the $k$-submodule of $A$ generated by $X$; given $X_n$, set $X_{n+1}=X_n+AX_n+X_nA$, where $AX_n$ denotes the set of all finite sums of products $ax$ with $a\in A$ and $x\in X_n$, and similarly for $X_nA$. In the case of the ideal $[I,J]$ this specializes as follows:

\begin{proposition}\label{commutator} Let $I$ and $J$ be ideals of a pre-Lie $k$-algebra $A$. Then $$[I,J]=IJ+\sum_{n\ge 0}S_n,$$ where $S_n=((\dots(((JI)A)A)\dots)A)A$ and in $S_n$ there are $n$ factors equal to $A$ on the right of the factor J$I$.\end{proposition}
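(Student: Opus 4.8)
The strategy is to apply the general description of the ideal generated by a set, recalled just before the statement, to the specific set $X=\{\,ij,ji\mid i\in I,\ j\in J\,\}$, and to compute the ascending chain $X_0\subseteq X_1\subseteq\cdots$ explicitly. Write $X_0$ for the $k$-submodule generated by all $ij$ and $ji$; then $[I,J]=\langle X\rangle=\bigcup_{n\ge 0}X_n$ with $X_{n+1}=X_n+AX_n+X_nA$. The claim is that this union equals $IJ+\sum_{n\ge 0}S_n$, where $IJ$ is the $k$-submodule generated by all products $ij$ (with $i\in I$, $j\in J$) and $S_n=((\dots((JI)A)\dots)A)A$ has $n$ trailing factors $A$, so $S_0=JI$ (the $k$-submodule generated by the $ji$).

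First I would observe that $X_0=IJ+S_0$ by definition. The essential point is then to understand what happens when we multiply by $A$ on the left or on the right. Multiplying by $A$ on the \emph{right} clearly takes $S_n$ into $S_{n+1}$ and takes $IJ$ into $(IJ)A$; but since $J$ is an ideal, $JA\subseteq J$, hence $(IJ)A\subseteq I(JA)+[\text{associator terms}]$, and here the pre-Lie identity is what lets us rewrite associators. More precisely, for $a\in A$, $i\in I$, $j\in J$ the element $(ij)a-i(ja)$ equals the associator $(i,j,a)$, which by the pre-Lie identity~(\ref{321}) equals $(j,i,a)=(ji)a-j(ia)$; since $ja\in J$ we get $i(ja)\in IJ$, and $j(ia)\in S_1$ because $ia\in I$ forces $ji$-type products... — more carefully, $(ji)a\in S_1$ and $j(ia)$ is a product of $j\in J$ with $ia\in I$, i.e. lies in $JI=S_0$, so altogether $(ij)a\in IJ+S_0+S_1$. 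A symmetric computation handles multiplication by $A$ on the \emph{left}: using that $I$ is an ideal so $AI\subseteq I$, one rewrites $a(ij)$, $a((ji)a_1\cdots a_n)$ via the pre-Lie identity into a sum of terms already in $IJ+\sum_m S_m$. The bookkeeping needs care but is routine: the upshot is $A(IJ+\sum_{m\le n}S_m)+(IJ+\sum_{m\le n}S_m)A\subseteq IJ+\sum_{m\le n+1}S_m$, using only that $I,J$ are ideals and identity~(\ref{321}).

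With that closure computation in hand, the proof concludes by a double inclusion. By induction on $n$, $X_n\subseteq IJ+\sum_{m\le n}S_m$, whence $\langle X\rangle\subseteq IJ+\sum_{m\ge 0}S_m$. Conversely $IJ+\sum_{m\ge 0}S_m\subseteq\langle X\rangle$ because $IJ\subseteq X_0$, $S_0\subseteq X_0$, and inductively $S_{m+1}=S_mA\subseteq X_m A\subseteq X_{m+1}$; since $\langle X\rangle$ is a $k$-submodule containing all the $X_n$, it contains the sum. Combining with the two theorems identifying $[I,J]_H$ and $[I,J]_S$ with $\langle\{ij,ji\}\rangle$ gives the stated formula for $[I,J]$.

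The main obstacle I anticipate is the associator bookkeeping in the closure step: one has to check that multiplying a nested product $(\dots((JI)A)\dots)A$ by a further element of $A$ on the \emph{left} does not produce anything outside $IJ+\sum_m S_m$. The pre-Lie identity only controls the associator of the \emph{first two} slots, so to move an $A$-factor past an interior position one must commute it inward step by step, at each stage trading an associator $(x,y,z)$ for $(y,x,z)$, and check that each exchanged term lands in $IJ$ (because $I$ or $J$ absorbs one factor) or back in some $S_m$. Getting the indices to close — i.e. that a left multiplication raises the $S$-index by at most one — is the delicate point; everything else is formal.
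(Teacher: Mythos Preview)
Your approach is correct and is essentially the paper's own argument: show that $IJ+\sum_{n\ge 0}S_n$ contains the generators and is closed under left and right multiplication by $A$, using identity~(\ref{321}) to control associators. The obstacle you flag dissolves once you notice (as the paper does) that in fact $A(IJ)\subseteq IJ$ and $A(JI)\subseteq JI$ outright---a single use of~(\ref{321}) plus the ideal property of $I$ and $J$---after which a short induction gives $AS_n\subseteq S_n+S_{n+1}$ and the closure follows without any step-by-step inward commutation.
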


\begin{proof} {\em Step 1: $A(IJ)\subseteq IJ$.}

By Property~(\ref{321}), we have that $A(IJ)\subseteq (AI)J+(IA)J+I(AJ)\subseteq IJ$.

\medskip

 {\em Step 2: $A(JI)\subseteq JI$.}
 
 From Step 1, by symmetry.
 
 \medskip
 
  {\em Step 3: $AS_n\subseteq S_n+S_{n+1}$ for every $n\ge0$.}
  
  Induction on $n$. Step 2 gives the case $n=0$. Suppose that $AS_n\subseteq S_n+S_{n+1}$ for some $n\ge0$. Then $AS_{n+1}=A(S_nA)\subseteq (AS_n)A+(S_nA)A+S_n(AA)\subseteq (S_n+S_{n+1})A+S_{n+2}+S_{n+1}=S_{n+1}+S_{n+2}$.

\medskip
 
  {\em Step 4: $S_nA=S_{n+1}$.}
  
  By definition.
  
  \medskip
 
  {\em Step 5: $(IJ)A\subseteq IJ+S_0+S_1$.}
  
  In fact, $(IJ)A\subseteq I(JA)+(JI)A+J(IA)\subseteq IJ+S_1+S_0$.
  
  \medskip
 
  {\em Final Step.}
  
  Clearly, $IJ+\sum_{n\ge 0}S_n$ is a $k$-submodule of $A$ that contains $IJ$ and $JI$ and is contained in the ideal generated by $IJ\cup JI$. Hence it remains to show that it is closed by left and right multiplication by elements of $A$. This is proved in Steps 1, 3, 4 and 5.
 \end{proof}

Now that we have a good notion of commutator of two ideals $I$ and $J$ of a pre-Lie $k$-algebra $A$, we can introduce the multiplicative lattice of all ideals of $A$: it is the complete modular lattice $\Cal I(A)$ of all ideals of $A$ endowed with the commutator of ideals. Notice that, trivially, $[I,J]\subseteq I\cap J$. As a consequence of looking at pre-Lie algebras from the point of view of multiplicative lattices, we immediately get the notions of prime ideal of a pre-Lie $k$-algebra $A$, (Zariski) prime spectrum of $A$, semiprime ideal, abelian pre-Lie algebra, idempotent (=perfect) pre-Lie algebra, derived series, solvable pre-Lie algebra, lower central series, nilpotent pre-Lie algebra, $m$-system, $n$-system, hyperabelian pre-Lie algebra, metabelian pre-Lie algebra, Jacobson radical, centralizer of an ideal, center of a pre-Lie $k$-algebra, hypercenter. See the next Section~\ref{5} and \cite{FFJ, Francesco, F, primes-mult}.

\medskip

Notice that {\em the monotonicity condition holds} for our commutator of ideals of a pre-Lie algebra $A$, in the sense that if $I\le I'$ and $J\le J'$ are ideals of $A$, then $[I,J]\le [I',J']$.

\medskip

Also notice that the description of the commutator in Proposition~\ref{commutator} reduces, in the case of $I=J=A$, to the equality $[A,A]=A^2=AA$. Here $A^2$ is the image of the $k$-module morphism $\mu\colon A\otimes_kA\to A$ induced by the $k$-bilinear multiplication of $A$.

\section{The commutator is not associative}\label{5}

In this section we will show that the commutator of ideals in a pre-Lie algebra $A$ is not associative in general, that is, if $I,J,K$ are ideals of $A$, it is not necessarily true that $[I,[J,K]]=[[I,J],K]$. In our example, the algebra $A$ will be factor algebra $A:=\mathcal{T}/P$, where $\mathcal{T}$ is the pre-Lie algebra of rooted trees of Example~4 in Section~\ref{3}, and $P$ is the ideal of $\mathcal{T}$ generated by all rooted trees with at least $5$ vertices. Such $P$ is the $k$-submodule of $\mathcal{T}$ generated by all rooted trees with at least $5$ vertices. The rooted trees with at most $4$ vertices up to isomorphism are \[v=\vcenter{\hbox{\begin{tikzpicture}
			\node[circle,draw] {$1$};
\end{tikzpicture}}}\quad , \quad 
e=\vcenter{\hbox{\begin{tikzpicture}
			\node[circle,draw] {$1$}
			child {node[circle,draw] {$2$}};
\end{tikzpicture}}}, \quad a=\vcenter{\hbox{\begin{tikzpicture}
			\node[circle,draw] {$1$}
			child {node[circle,draw] {$2$}}
			child {node[circle,draw] {$3$}};
\end{tikzpicture}}},
            \]
			\[ \quad b= \vcenter{\hbox{\begin{tikzpicture}
			\node[circle,draw] {$1$}
			child {node[circle,draw] {$2$}
					child {node[circle,draw] {$3$}
				}
			};\end{tikzpicture}}}\quad , \quad c=\vcenter{\hbox{\begin{tikzpicture}
			\node[circle,draw] {$1$}
			child {node[circle,draw] {$2$}}
			child {node[circle,draw] {$3$}}
			child {node[circle,draw] {$4$}};
\end{tikzpicture}}}, \]
			\[
			 d=
\vcenter{\hbox{\begin{tikzpicture}
			\node[circle,draw] {$1$}
			child {node[circle,draw] {$2$}
				child {node[circle,draw] {$3$}}
				child {node[circle,draw] {$4$}}
			};
\end{tikzpicture}}}, \quad f=\vcenter{\hbox{\begin{tikzpicture}
			\node[circle,draw] {$1$}
			child {node[circle,draw] {$2$}}
			child {node[circle,draw] {$3$}
					child {node[circle,draw] {$4$}}
				};
\end{tikzpicture}}}\quad , \quad g=
\vcenter{\hbox{\begin{tikzpicture}
			\node[circle,draw] {$1$}
			child {node[circle,draw] {$2$}
				child {node[circle,draw] {$3$}
					child {node[circle,draw] {$4$}}
				}
			};
\end{tikzpicture}}}.\]
Hence our pre-Lie $k$-algebra $A$ is eight dimensional, and we will denote by $v,e,a,b,$\linebreak $c,d,f,g$ the images in $A$ of the corresponding rooted trees. That is, we will say that $\{v,e,a,b,c,d,f,g\}$ is a free set of generators for the free $k$-module $A$. From the multiplication in $\mathcal{T}$ defined in Example~4 of Section~\ref{3}, we get that the multiplication table in $A$ is  \bigskip

  \begin{center}
    \begin{tabular}{|c|c|c|c|c|c|c|c|c|}
    \hline
          & $v$ & $e$ & $a$ & $b$ & $c$ & $d$ & $f$ & $g$  \\
  \hline
        $v$  & $e$ & $a+b$ & $c+2f$ & $f+d+g$ & $0$ & $0$ & $0$ & $0$  \\ \hline
        $e$  & $b$ & $f+g$ & $0$ & $0$ & $0$ & $0$ & $0$ & $0$  \\ \hline
         $a$ & $d$ & $0$ & $0$ & $0$ & $0$ & $0$ & $0$ & $0$  \\  \hline
      $b$    & $g$ & $0$ & $0$ & $0$ & $0$ & $0$ & $0$ & $0$  \\  \hline
      $c$    & $0$ & $0$ & $0$ & $0$ & $0$ & $0$ & $0$ & $0$  \\ \hline
       $d$   & $0$ & $0$ & $0$ & $0$ & $0$ & $0$ & $0$ & $0$  \\  \hline
        $f$  &  $0$ & $0$ & $0$ & $0$ & $0$ & $0$ & $0$ & $0$  \\  \hline
      $g$    & $0$ & $0$ & $0$ & $0$ & $0$ & $0$ & $0$ & $0$  \\
       \hline      
    \end{tabular}
  \end{center}
   
    \bigskip
    
    \noindent From the multiplication table we see that $A^2=AA$ has $\{e,b,d,g,a,f,c\}$ as a set of generators, and is a seven  dimensional free $k$-module.
    
    Now $[A^2,A^2]=\sum_{n\ge 0}(\dots((A^2\cdot A^2)\cdot A)\cdot\cdots\cdot A)\cdot A$, where there are $n$ factors equal to $A$ on the right. But, always from the multiplication table, one sees that $A^2\cdot A^2$ is generated by $f+g$. Moreover $(f+g)A=0$ and $A(f+g)=0$. Therefore $[A^2,A^2]$ is one dimension as a free $k$-module, and its free set of generators is $\{f+g\}$.
    
    Similarly, $[A^2,A]=A\cdot A^2+\sum_{n\ge 1}(\dots((A^2\cdot A)\cdot A)\cdot\cdots\cdot A)\cdot A$, where there are $n+1$ factors equal to $A$ on the right. From the multiplication table, we see that $A\cdot A^2$ is generated by $a+b,f+g,c+2f,f+d+g$. Also, $A^2\cdot A$ is generated by $\{b,d,g,f+g\}$, $(A^2\cdot A)\cdot A$ is generated by $g$, and $((A^2\cdot A)\cdot A)\cdot A=0$. Therefore $[A^2,A]$ is the $k$-module generated by $b,d,g,f,a,c$ and is six dimensional. It follows that $[A^2,A]\cdot A$ is generated by $\{d,g\}$, $A\cdot([A^2,A])$ is generated by $\{c+2f,f+d+g\}$, and $([A^2,A]\cdot A)\cdot A=0$. From these equalities we get that $[[A^2,A], A]$ is generated by $\{d,g,c+2f,f+d+g\}$. Equivalently, $[[A^2,A], A]$ is generated by $\{d,g,f,c\}$ and is four dimensional. In particular $[A^2,A^2]\ne [[A^2,A], A]$.

\medskip

Let's illustrate in detail some of the notions that immediately derive from the commutative multiplication $[-,-]$ (the commutator) in the multiplicative lattice $\Cal I(A)$.

\medskip

First of all, a pre-Lie $k$-algebra $A$ is {\em abelian} if the commutator of $A$ and itself is zero: $[A,A]=0$. This is equivalent to saying that $ij=0$ for every $i,j\in A$. That is, a pre-Lie algebra $(A,\cdot)$ is abelian if and only if $x\cdot y=0$ for every $x,y\in A$.
(This is equivalent to requiring that the addition $+\colon A\times A\to A$ is a pre-Lie algebra morphism.)

\medskip

By definition, an ideal $I$ of a pre-Lie $k$-algebra $A$ is {\em prime} if it is properly contained in $A$ and, for every ideal $J,K$ of $A$, $[J,K]\subseteq I$ implies $J\subseteq I$ or $K\subseteq I$. An ideal $I$ of a pre-Lie $k$-algebra $A$ is {\em semiprime} if, for every ideal $J$ of $A$, $[J,J]\subseteq I$ implies that $J\subseteq I$. An ideal of $A$ is semiprime if and only if it is the intersection of a family of prime ideals (if and only if it is the intersection of all the ideals of $A$ that contain it). An ideal $P$ of a pre-Lie $k$-algebra $A$ is prime if and only if the lattice $\Cal I(A/P)$ is  uniform and $A/P$ has no non-zero abelian ideal.

\begin{remark} Instead of the commutator $[I,J]$ of two ideals $I$ and $J$, we could have taken two other ``product of ideals'' in a pre-Lie $k$-algebra: we could consider the product $IJ$, i.e., the $k$-submodule of $A$ generated by all products $ij$, which is a $k$-submodule but not an ideal of $A$ in general, or the ideal $\langle IJ\rangle$ generated by the submodule $IJ$. Notice that $IJ\subseteq \langle IJ\rangle\subseteq [I,J]=\langle IJ\rangle+\langle JI\rangle$, where the last equality follows from Proposition~\ref{commutator}. Correspondingly, we would have had three different notions of ``prime ideal''. In the next proposition (essentially contained in \cite[Example~3.7]{FFJ}) we prove that these three notions of ``prime ideal'' coincide:

\begin{proposition} \label{associ} The following conditions are equivalent for an ideal $P$ of a pre-Lie algebra $A$:

{\rm (a)} If $I,J$ are ideals of $A$ and $IJ\subseteq P$, then either $I\subseteq P$ or $J\subseteq P$.

{\rm (b)} If $I,J$ are ideals of $A$ and $\langle IJ\rangle \subseteq P$, then either $I\subseteq P$ or $J\subseteq P$.

{\rm (c)} If $I,J$ are ideals of $A$ and $[I,J]\subseteq P$, then either $I\subseteq P$ or $J\subseteq P$.\end{proposition}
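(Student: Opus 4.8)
The plan is to prove the three implications (c)$\Rightarrow$(b)$\Rightarrow$(a)$\Rightarrow$(c) — or, more efficiently, to observe that the chain of inclusions $IJ\subseteq\langle IJ\rangle\subseteq[I,J]$ already gives (c)$\Rightarrow$(b)$\Rightarrow$(a) for free (a larger set being contained in $P$ is a stronger hypothesis, so primeness with respect to the larger product is the weaker property), and that the only real content is (a)$\Rightarrow$(c).

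For the remaining implication, suppose $P$ satisfies (a), and let $I,J$ be ideals with $[I,J]\subseteq P$. Since $[I,J]=\langle IJ\rangle+\langle JI\rangle$ by Proposition~\ref{commutator}, in particular $IJ\subseteq P$ and $JI\subseteq P$. The natural move is to pass to the quotient $A/P$: writing $\bar{I},\bar{J}$ for the images of $I,J$ in $A/P$, we have $\bar I\,\bar J=\bar J\,\bar I=0$. I want to conclude $\bar I=0$ or $\bar J=0$. Consider the ideal $\bar I\cap\bar J$ of $A/P$: it satisfies $(\bar I\cap\bar J)(\bar I\cap\bar J)=0$, so $\bar I\cap\bar J$ is an abelian ideal of $A/P$ (using the characterization of abelian pre-Lie algebras: $x\cdot y=0$ for all $x,y$). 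This is where one invokes the structural characterization stated just before the remark — that $P$ is prime if and only if $\Cal I(A/P)$ is uniform and $A/P$ has no non-zero abelian ideal — but that is circular here since we are trying to establish a primeness condition, so instead I would argue directly.

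The direct argument: I would show that $\langle\bar I\rangle_{A/P}\cdot\langle\bar J\rangle_{A/P}$ can be controlled. Actually the cleanest route is to work with the ideal $\bar I\bar J$-machinery from \cite[Example~3.7]{FFJ}, which the authors cite; so I would invoke that reference for the fact that in the multiplicative lattice $\Cal I(A/P)$ (which is the lattice of ideals of a $\Omega$-group, hence modular and with the commutator behaving well), if $\bar I\cdot\bar J=0$ then the ideal generated by $\bar I$ and the ideal generated by $\bar J$ have commutator zero, and then use that $P$ is prime with respect to the submodule product to force $\bar I=0$ or $\bar J=0$. More concretely: from $IJ\subseteq P$ and $JI\subseteq P$ one wants $\langle IJ\rangle$-type closure, but the point of (a) is precisely that it already handles the submodule product $IJ$; since $[I,J]\subseteq P$ gives $IJ\subseteq P$ directly, hypothesis (a) immediately yields $I\subseteq P$ or $J\subseteq P$. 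That is the whole proof: $[I,J]\subseteq P\Rightarrow IJ\subseteq P\Rightarrow I\subseteq P$ or $J\subseteq P$.

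So in fact all three implications are immediate from the single inclusion chain $IJ\subseteq\langle IJ\rangle\subseteq[I,J]$ (Proposition~\ref{commutator}): a hypothesis of the form ``[bigger product]$\subseteq P$'' implies ``[smaller product]$\subseteq P$'', so condition stated with the bigger product implies the condition stated with the smaller product — giving (a)$\Rightarrow$(b)$\Rightarrow$(c) — while the reverse implications (c)$\Rightarrow$(b)$\Rightarrow$(a) require knowing that ``[smaller product]$\subseteq P$'' already forces ``[bigger product]$\subseteq P$'', i.e.\ that $P$ being ``$IJ$-prime'' forces $\langle JI\rangle\subseteq P$ whenever $\langle IJ\rangle\subseteq P$. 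The main obstacle is therefore this last point: showing that if $P$ is prime in sense (a) and $IJ\subseteq P$, then also $JI\subseteq P$ (equivalently $[I,J]\subseteq P$). I would establish it by the quotient argument above — in $A/P$, the relation $\bar I\bar J=0$ together with primeness (a) forces $\bar I=0$ or $\bar J=0$, and either way $\bar J\bar I=0$, hence $JI\subseteq P$; this closes the loop and shows all three conditions are equivalent. I expect the delicate step to be justifying that $\bar I\bar J=0$ implies the ideal $\langle\bar I\rangle\langle\bar J\rangle$ is small enough to apply (a) directly to $\bar I,\bar J$ themselves rather than to the generated ideals — but since $\bar I,\bar J$ are already ideals of $A/P$, no passage to generated ideals is needed, and the argument goes through cleanly by reducing mod $P$ and citing \cite[Example~3.7]{FFJ} for the lattice-theoretic bookkeeping.
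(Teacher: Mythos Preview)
Your proposal has a persistent confusion about which implications are trivial and which carry content. From the inclusion chain $IJ\subseteq\langle IJ\rangle\subseteq[I,J]$ one gets (a)$\Rightarrow$(b)$\Rightarrow$(c), not the reverse: if $[I,J]\subseteq P$ then a fortiori $IJ\subseteq P$, so condition (a), whose hypothesis is the weakest, is the \emph{strongest} of the three. Your first paragraph has this backwards; your third paragraph eventually states it correctly, but you then identify ``the main obstacle'' as showing that ``if $P$ is prime in sense (a) and $IJ\subseteq P$, then also $JI\subseteq P$'' --- which is vacuous, since (a) applied to $IJ\subseteq P$ already yields $I\subseteq P$ or $J\subseteq P$ outright, and in particular $JI\subseteq P$.

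The only nontrivial implication is (c)$\Rightarrow$(a): assuming merely (c), and given $IJ\subseteq P$, deduce $I\subseteq P$ or $J\subseteq P$. You never address this; every argument you sketch takes (a) as hypothesis. The paper's argument is short: from $IJ\subseteq P$ one has $\langle IJ\rangle\subseteq P$ since $P$ is an ideal; because $\langle JI\rangle\subseteq I\cap J$, one has $\langle JI\rangle\cdot\langle JI\rangle\subseteq IJ$, whence $[\langle JI\rangle,\langle JI\rangle]=\langle\langle JI\rangle\langle JI\rangle\rangle\subseteq\langle IJ\rangle\subseteq P$. Now apply (c) to the pair $(\langle JI\rangle,\langle JI\rangle)$ to get $\langle JI\rangle\subseteq P$, hence $[I,J]=\langle IJ\rangle+\langle JI\rangle\subseteq P$, and a second application of (c) finishes. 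The idea missing from your proposal is this bootstrap: one must first invoke (c) on $\langle JI\rangle$ against itself before (c) can be brought to bear on $I$ and $J$. Your remark that $(\bar I\cap\bar J)(\bar I\cap\bar J)=0$ is in the right spirit, but it sits in the wrong implication (where you already assume both $IJ\subseteq P$ and $JI\subseteq P$) and is never connected to hypothesis (c).
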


\begin{proof} The implications (a)${}\Rightarrow{}$(b)${}\Rightarrow {}$(c) follow immediately from the fact that $IJ\subseteq \langle IJ\rangle\subseteq [I,J]$.

(c)${}\Rightarrow {}$(a). Let $P$ satisfy condition (c) and fix two ideals $I,J$ of $A$ such that $IJ\subseteq P$. Since $P$ is an ideal, it follows that $\langle IJ\rangle\subseteq P$. Also, $[\langle JI\rangle, \langle JI\rangle]=\langle\langle JI\rangle\langle JI\rangle\rangle\le \langle IJ\rangle\le P$. From (c), we get that $\langle JI\rangle\le P$, so that $[I,J]=\langle IJ\rangle+\langle JI\rangle\le P$. From (c) again, we get that either $I\subseteq P$ or $J\subseteq P$.
\end{proof}

Proposition~\ref{associ} shows that if the pre-Lie algebra $A$ is an associative algebra, then this notion of prime ideal coincide with the notion of prime ideal in an associative algebra. Proposition~\ref{commutator} shows that, for every pair $(I,J)$ of ideals of a pre-Lie algebra $A$, one has $[I,J]=IJ+\langle JI\rangle =JI+\langle IJ\rangle$. Also, Step 5 in the proof of that Proposition shows that one always has that $IJ+JI+(IJ)A=IJ+JI+(JI)A$.\end{remark}

A pre-Lie $k$-algebra $A$ is {\em idempotent} (or {\em perfect}) if $[A,A]=A$, that is, if $A^2=A$ (last paragraph of Section~\ref{4}).

\medskip

Given any pre-Lie algebra $A$, let $\Spec(A)$ be the set of all its prime ideals. For every $I\in\Cal I(A)$, set $V(I)=\{\,P\in\Spec(A)\mid P\supseteq I\,\}$. Then the family of all subsets $V(I)$ of $\Spec(A)$, $I\in\Cal I(A)$, is the family of  all the closed sets for a topology on $\Spec(A)$. With this topology, the topological space $\Spec(A)$ is the {\em (Zariski) prime spectrum} of $A$, and is a sober space \cite {FFJ}. It is not a spectral space in the sense of Hochster in general. For instance, if $B$ is a Boolean ring without identity, then $B$ is a pre-Lie algebra, but its prime spectrum is not compact.  

\medskip

If the pre-Lie algebra $A$ is an associative algebra, then this notion of prime spectrum coincide with the ``standard notion'' of prime spectrum of an associative algebra $A$, where the points of the spectrum are the prime ideals of $A$ and the closed sets are the subsets $V(I)$ of the spectrum. To tell the truth, there is not a ``standard notion'' of prime spectrum of an associative algebra that extends the classical notion of prime spectrum for commutative associative algebras with identity. There are several such notions as it is shown in \cite{Reyes2} and \cite{Reyes}. For instance, the points of the spectrum could be the completely prime ideals of $A$, or the spectrum of $A$ could be defined to be the Zariski spectrum of the commutative ring $A/[A,A]$, where $[A,A]$ now denotes the ideal of $A$ generated by all elements $ab-ba$.

\medskip

A pre-Lie $k$-algebra $A$ is {\em hyperabelian} if it has no prime ideal. For instance, abelian pre-Lie algebras are hyperabelian.

\medskip

Let $A$ be a pre-Lie $k$-algebra. 
The {\em lower central series} (or {\em descending central series}) of $A$ is the descending series $$A=A_1\ge A_2\ge A_3\ge\dots,$$ where $A_{n+1}:=[A_n, A]$ for every $n\ge 1$. If $A_n=0$ for some $n\ge 1$, then $A$ is {\em nilpotent}. (Notice that it is not necessary to distinguish between left nilpotency and right nilpotency, because the commutator is commutative, that is, [$A_n, A]=[A,A_n]$.)

The {\em derived series} of $A$ \cite[Definition~6.1]{FFJ}  is the descending series $$A:=A^{(0)}\ge A^{(1)}\ge A^{(2)}\ge\dots,$$ where $A^{(n+1)}:=[A^{(n)},A^{(n)}]$ for every $n\ge 0$.  The pre-Lie algebra $A$ is {\em solvable} if $A^{(n)}=0$ for some integer $n\ge0$. It is {\em metabelian} if $A^{(2)}=0$.

In a multiplicative lattice an element is {\em semisimple} if it is the join of a set of minimal idempotent elements. (An element $m$ of a lattice $L$ is {\em minimal} if, for every $x\in L$, $x\le m$ implies $x=m$ or $x=0$, that is, if it is minimal in the partially ordered set $L\setminus\{0\}$. An element $e$ of a multiplicative lattice $L$ is {\em idempotent} if $e\cdot e=e$). ``Minimal idempotent element'' of $L$ means minimal element of $L\setminus\{0\}$ that is also an idempotent element. Notice that for a minimal element $x\in L$ either $x\cdot x=x$ or $x\cdot x=0$, i.e., minimal elements are either idempotent or abelian.

The {\em Jacobson radical} of $L$ is the meet of the set of all maximal elements $a$ of $L\setminus\{1\}$ with $1\cdot 1\not\le a$. The {\em radical} is the join of the set of all solvable elements of $L$.

\section{Idempotent endomorphisms, semidirect products of pre-Lie algebras, and actions}

Let $e$ be an idempotent endomorphism of a pre-Lie $k$-algebra $A$. Then $A=\ker(e)\oplus e(A)$ (direct sum as $k$-modules), where the kernel $\ker(e)$ of $e$ is an ideal of $A$ and its image $e(A)$ is a pre-Lie sub-$k$-algebra of $A$. If there is a direct-sum decomposition $A=I\oplus B$ as $k$-module of a pre-Lie $k$-algebra $A$, where $I$ is an ideal of $A$ and $B$ is a pre-Lie sub-$k$-algebra of $A$, we will say that $A$ is the {\em semidirect product} of $I$ and $B$. We are interested in semidirect products because, for any algebraic structure, idempotent endomorphisms are in one-to-one correspondence with semidirect products and are related to the notion of action of the structure on another structure, and bimodules.

\medskip

The proof of the following proposition is elementary.

\begin{proposition} \label{0.1} Let $A$ be  a pre-Lie $k$-algebra, $I$ an ideal of $A$ and $B$ a pre-Lie sub-$k$-algebra of $A$. The following conditions are equivalent:

{\rm $(1)$} $A=I\oplus B$ as a $k$-module.

{\rm $(2)$}    For every $a\in A$, there are a unique $i\in I$ and a unique $b\in B$ such that $a=i+b$.
 
{\rm  $(3)$}    There exists a pre-Lie $k$-algebra morphism $A\to B$ whose restriction to $B$ is the identity and whose kernel is $I$.  
 
{\rm  $(4)$ } There is an idempotent pre-Lie $k$-algebra endomorphism of $A$ whose image is $B$ and whose kernel is $I$.\end{proposition}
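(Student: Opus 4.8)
The plan is to run the cycle $(1)\Leftrightarrow(2)$, $(2)\Rightarrow(4)$, $(4)\Rightarrow(3)$, $(3)\Rightarrow(1)$. The equivalence $(1)\Leftrightarrow(2)$ involves nothing about the multiplication: it is merely the reformulation of ``$A=I\oplus B$ as a $k$-module'' as the conjunction $A=I+B$ and $I\cap B=0$, which is exactly the existence-and-uniqueness statement in $(2)$.

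For $(2)\Rightarrow(4)$, I would define $e\colon A\to A$ by $e(a)=b$, where $a=i+b$ is the unique decomposition provided by $(2)$. Uniqueness makes $e$ well defined, and $k$-linearity of the decomposition in $a$ makes $e$ a $k$-module endomorphism. It is idempotent since $b=0+b$, so $e(b)=b$; its image is $B$; and its kernel is $I$, because $e(a)=0$ forces $a=i\in I$, while $a\in I$ gives $a=a+0$, hence $e(a)=0$. The only computation needed is that $e$ respects the multiplication: writing $a=i+b$ and $a'=i'+b'$, one has $aa'=ii'+ib'+bi'+bb'$, where $ii',\,ib',\,bi'\in I$ because $I$ is an ideal, and $bb'\in B$ because $B$ is a subalgebra; thus the $B$-component of $aa'$ is $bb'=e(a)e(a')$, i.e.\ $e(aa')=e(a)e(a')$. (This step uses only that $A$ is a $k$-algebra, $I$ an ideal and $B$ a subalgebra; the pre-Lie identity is not needed anywhere.)

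For $(4)\Rightarrow(3)$, given an idempotent pre-Lie endomorphism $e$ of $A$ with $e(A)=B$ and $\ker(e)=I$, I would restrict its codomain to $B$, obtaining a pre-Lie algebra morphism $\pi\colon A\to B$ with $\ker(\pi)=I$; since $e$ is idempotent it fixes every element of $B=e(A)$ pointwise, so $\pi|_B=\id_B$. Finally, for $(3)\Rightarrow(1)$, given such a $\pi$, compose it with the inclusion $\iota\colon B\hookrightarrow A$ to get an endomorphism $f=\iota\pi$ of $A$; it is idempotent because $\pi\iota=\pi|_B=\id_B$, its kernel is $\ker(\pi)=I$ (as $\iota$ is injective), its image is $\iota(B)=B$, and $I\cap B=0$ since any $b\in B\cap\ker(\pi)$ satisfies $b=\pi|_B(b)=\pi(b)=0$. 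Then the standard splitting for an idempotent endomorphism of a $k$-module, $a=(a-f(a))+f(a)$ with $a-f(a)\in\ker(f)=I$ and $f(a)\in f(A)=B$, yields $A=I\oplus B$ as $k$-modules. There is no genuine obstacle: the whole argument is routine, the only place where the algebra structure (beyond the $k$-module structure) is used being the one-line multiplicativity check in $(2)\Rightarrow(4)$.
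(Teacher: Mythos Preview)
Your argument is correct and complete; the paper itself does not give a proof at all, merely stating that ``the proof of the following proposition is elementary.'' Your cycle $(1)\Leftrightarrow(2)\Rightarrow(4)\Rightarrow(3)\Rightarrow(1)$ is exactly the routine verification the authors had in mind, and your observation that the pre-Lie identity is nowhere used (only that $I$ is an ideal and $B$ a subalgebra) is spot on.
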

 
It is now clear that there is a one-to-one correspondence between the set of all idempotent endomorphisms of a pre-Lie $k$-algebra  $A$ and the set of all pairs $(I,B)$, where $I$ is an ideal of $A$, $B$ is a pre-Lie sub-$k$-algebra of $A$, and $A$ is the direct sum of $I$ and $B$ as a $k$-module.

\bigskip

Let us first consider inner semidirect product. Suppose  that $(A,\cdot)$ is a pre-Lie $k$-algebra that is a semidirect product of its ideal $I$ and its pre-Lie sub-$k$-algebra $B$. Then there is a pre-morphism $\lambda\colon (B,\cdot)\to (\End(I_k),\circ)$, given by multiplying on the left by elements of $B$ (this follows from the fact that every ideal is a module, as we have already remarked in Section~\ref{yyy}). Also, there is a $k$-module morphism $\rho\colon B\to \End(I_k)$, given by multiplying on the right by elements of $B$, that is, $\rho\colon b\mapsto\rho_b$, where $\rho_b(i)=i\cdot b$ for every $i\in I$. Moreover, Identity (\ref{321}), applied to elements $x,z$ in $B$ and $y\in I$, can be re-written as $\rho_a(\lambda_b(i))-\lambda_b(\rho_a(i))=(\rho_a\circ\rho_b-\rho_{b\cdot a})(i)$ for every $a,b\in B$ and $i\in I$. Identity (\ref{321}), applied to elements $x$ in $B$ and $y,z\in I$, can be re-written as $\lambda_a(i)\cdot j-\lambda_a(i\cdot j)=\rho_a(i)\cdot j-i\cdot\lambda_a(j)$ for every $a\in B$ and $i,j\in I$. Finally, the same identity (\ref{321}), applied to elements $z$ in $B$ and $x,y\in I$, can be re-written as $\rho_a(x\cdot y)-x\cdot\rho_a(y)=\rho_a(y\cdot x)-y\cdot\rho_a(x) $ for every $a\in B$ and $i,j\in I$.

Conversely, for outer semidirect product:

\begin{theorem}\label{vilu} Let $I$ and $B$ be pre-Lie $k$-algebras and $(\lambda,\rho)$ a pair of $k$-linear mappings $B\to\End(I_k)$ such that:

{\rm (a)} $\lambda\colon (B,\cdot)\to (\End(I_k),\circ)$ is a pre-morphism.

{\rm (b)} $\rho_a\circ\lambda_b-\lambda_b\circ\rho_a=\rho_a\circ\rho_b-\rho_{b\cdot a}$ for every $a,b\in B$.

{\rm (c)} $\lambda_a(i)\cdot j-\lambda_a(i\cdot j)=\rho_a(i)\cdot j-i\cdot\lambda_a(j)$ for every $a\in B$ and $i,j\in I$.

{\rm (d)} $\rho_a(i\cdot j)-i\cdot\rho_a(j)=\rho_a(j\cdot i)-j\cdot\rho_a(i) $ for every $a\in B$ and $i,j\in I$.
\\ On the $k$-module direct sum $I\oplus B$ define a multiplication $*$ setting $$(i,b)*(j,c)=(i\cdot j+\lambda_b(j)+\rho_c(i), b\cdot c)$$ for every $(i,b),(j,c)\in I\oplus B$. Then $(I\oplus B, *)$ is a pre-Lie $k$-algebra.\end{theorem}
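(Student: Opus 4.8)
The plan is to verify directly that the multiplication $*$ defined on $I\oplus B$ satisfies the pre-Lie identity \eqref{321}, namely that the associator $((i,b),(j,c),(l,d))\mapsto((i,b)*(j,c))*(l,d)-(i,b)*((j,c)*(l,d))$ is symmetric in its first two arguments. The key observation that organizes the computation is that, since $\lambda$ and $\rho$ are $k$-linear and $*$ is built $k$-linearly out of the given data, the expression $((i,b)*(j,c))*(l,d)-(i,b)*((j,c)*(l,d))$ is $k$-trilinear in the three pairs. Hence it suffices to check the pre-Lie identity on ``pure'' arguments: triples in which each of the three entries is either of the form $(i,0)$ (an element of $I$) or of the form $(0,b)$ (an element of $B$). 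There are $2^3=8$ such cases to examine.

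\textbf{Reduction of the eight cases.} Several of the eight cases are either trivial or reduce to already-known identities. When all three arguments lie in $B$, the $B$-component obeys the pre-Lie identity of $B$ and the $I$-components involve only $\lambda$, so this case follows from hypothesis (a) together with the pre-Lie identity in $B$ (via Lemma~\ref{nuovo}, (a) says exactly $\lambda([b,c])=[\lambda_b,\lambda_c]$). When all three lie in $I$, the identity is just the pre-Lie identity of $I$ itself. The genuinely new cases are the ``mixed'' ones: one argument in $B$ and two in $I$ (three sub-cases, according to the position of the $B$-argument), and two arguments in $B$ and one in $I$ (three sub-cases). In each mixed case I would expand both associators using the definition of $*$, collect the $I$-component (the $B$-component is automatic from the pre-Lie identity of $B$ whenever at least two $B$-arguments are present, and is vacuous otherwise), and match the resulting $I$-valued expression against exactly one of the hypotheses: the case ``$B,I,I$'' matches (c), the case ``$I,I,B$'' matches (d) after rearrangement, the case ``$I,B,I$'' matches (c) again after using the symmetry being proved, the case ``$B,B,I$'' matches (b), and the cases ``$B,I,B$'' and ``$I,B,B$'' again reduce to (b) and (a). The correspondence of each case with exactly one hypothesis is essentially forced, since the hypotheses (a)--(d) were obtained in the preceding paragraph precisely by instantiating \eqref{321} on the three possible patterns of arguments drawn from $B$ and $I$ inside the inner semidirect product.

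\textbf{Main obstacle.} The hard part will be purely bookkeeping: in the mixed cases, expanding $((i,b)*(j,c))*(l,d)$ produces nested terms such as $\lambda_b(j)\cdot l$, $\rho_d(\lambda_b(j))$, $\rho_d(i\cdot j)$, $\lambda_{b\cdot c}(l)$, and so on, and one must carefully track which of $i\cdot j$, $\lambda$, $\rho$ each summand involves and then regroup so that the first-two-arguments swap produces precisely the left- and right-hand sides of one of (a)--(d). No single step is deep, but the number of terms is large enough that a systematic term-by-term comparison, case by case, is needed; the risk is a sign error or a dropped term rather than any conceptual difficulty. Once all eight cases are checked, $k$-trilinearity extends the identity to arbitrary elements of $I\oplus B$, and since the multiplication $*$ is manifestly $k$-bilinear, $(I\oplus B,*)$ is a pre-Lie $k$-algebra, completing the proof.
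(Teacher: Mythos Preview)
Your strategy is correct and amounts to the same computation as the paper's, just organized differently: the paper expands the full associator of three general elements $(x,a),(y,b),(z,c)$ and then groups the resulting $I$-component into six equalities, each of which is an instance of one of (a)--(d); your trilinearity reduction to the eight ``pure'' patterns is exactly this grouping done in advance. The six mixed cases you list correspond one-to-one with the paper's six matched equalities.

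Two small corrections to your case-to-hypothesis matching, which you would discover anyway upon doing the expansions: the pattern $(B,B,I)$ yields $\lambda_{a\cdot b}(l)-\lambda_a\lambda_b(l)=\lambda_{b\cdot a}(l)-\lambda_b\lambda_a(l)$, which is hypothesis~(a), not~(b); and the patterns $(B,I,B)$ and $(I,B,B)$ are the same equation and both match~(b) alone (not (b) and (a)). Also, in the $(B,B,B)$ case the $I$-component of the associator is identically zero, so nothing involving $\lambda$ actually appears there; only the pre-Lie identity of $B$ is needed. Finally, your phrase ``after using the symmetry being proved'' for the $(I,B,I)$ case sounds circular but is not: you simply mean that the identity to be checked for $(I,B,I)$ is literally the same equation as for $(B,I,I)$ with the roles of the first two arguments swapped, so the two cases are a single verification.
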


\begin{proof} For every $a,b,c\in B$ and $x,y,z\in I$ we have that
\begin{equation}\begin{array}{l} ((x,a)*(y,b))*(z,c)=(x\cdot y+\lambda_a(y)+\rho_b(x), a\cdot b)*(z,c)=\\
=((x\cdot y)\cdot z+\lambda_a(y)\cdot z+\rho_b(x)\cdot z+\lambda_{a\cdot b}(z)+\\
+\rho_c(x\cdot y+\lambda_a(y)+\rho_b(x)),(a\cdot b)\cdot c)\end{array}\label{a}\end{equation}
and 
\begin{equation}\begin{array}{l} (x,a)*((y,b)*(z,c))=(x,a)*(y\cdot z+\lambda_b(z)+\rho_c(y), b\cdot c)=\\
=(x\cdot(y\cdot z)+x\cdot\lambda_b(z)+x\cdot\rho_c(y)+\\
+\lambda_a(y\cdot z+\lambda_b(z)+\rho_c(y))+\rho_{b\cdot c}(x), a\cdot(b\cdot c)).\end{array}\label{b}\end{equation}
The difference of (\ref{a}) and (\ref{b}) is 
$$\begin{array}{l} ((x\cdot y)\cdot z-x\cdot(y\cdot z)+\lambda_a(y)\cdot z-\lambda_a(y\cdot z)+\\
\qquad +\rho_b(x)\cdot z-x\cdot\lambda_b(z)+
\lambda_{a\cdot b}(z)-(\lambda_a\circ\lambda_b)(z)+  \\
\qquad+\rho_c(x\cdot y)-x\cdot\rho_c(y)
+\rho_c(\lambda_a(y))-\lambda_a(\rho_c(y))+\rho_c(\rho_b(x))-\rho_{b\cdot c}(x)), \\
\qquad(a\cdot b)\cdot c-a\cdot(b\cdot c)).\end{array}$$
Similarly,
$$\begin{array}{l} ((y,b)*(x,a))*(z,c)- (y,b)*((x,a)*(z,c))= \\ \qquad=((y\cdot x)\cdot z-y\cdot(x\cdot z)+\lambda_b(x)\cdot z-\lambda_b(x\cdot z)+\rho_a(y)\cdot z-y\cdot\lambda_a(z)+\\
\qquad+
\lambda_{b\cdot a}(z)-(\lambda_b\circ\lambda_a)(z)+  \rho_c(y\cdot x)-y\cdot\rho_c(x)
+\rho_c(\lambda_b(x))-\lambda_b(\rho_c(x))+\\
\qquad+\rho_c(\rho_a(y))-\rho_{a\cdot c}(y)), (b\cdot a)\cdot c-b\cdot(a\cdot c)).\end{array}$$
Hence, for the proof, it suffices to show that
\begin{equation}\begin{array}{l} \lambda_a(y)\cdot z-\lambda_a(y\cdot z)+\rho_b(x)\cdot z-x\cdot\lambda_b(z)+
\lambda_{a\cdot b}(z)-(\lambda_a\circ\lambda_b)(z)+  \\
+\rho_c(x\cdot y)-x\cdot\rho_c(y)
+\rho_c(\lambda_a(y))-\lambda_a(\rho_c(y))+\rho_c(\rho_b(x))-\rho_{b\cdot c}(x))= \\ =\lambda_b(x)\cdot z-\lambda_b(x\cdot z)+\rho_a(y)\cdot z-y\cdot\lambda_a(z)+\\
+
\lambda_{b\cdot a}(z)-(\lambda_b\circ\lambda_a)(z)+\\
+  \rho_c(y\cdot x)-y\cdot\rho_c(x)
+\rho_c(\lambda_b(x))-\lambda_b(\rho_c(x))+\\
+\rho_c(\rho_a(y))-\rho_{a\cdot c}(y)).\end{array}\label{c}\end{equation}
Now $$\begin{array}{ll}   \lambda_a(y)\cdot z-\lambda_a(y\cdot z)=\rho_a(y)\cdot z-y\cdot\lambda_a(z) & \mbox{\rm by hypotheses (c);} \\
\rho_b(x)\cdot z-x\cdot\lambda_b(z)=\lambda_b(x)\cdot z-\lambda_b(x\cdot z)  & \mbox{\rm by hypotheses (c);} \\
\lambda_{a\cdot b}(z)-(\lambda_a\circ\lambda_b)(z)=\lambda_{b\cdot a}(z)-(\lambda_b\circ\lambda_a)(z)  & \mbox{\rm by hypotheses (a);} \\
\rho_c(x\cdot y)-x\cdot\rho_c(y)=\rho_c(y\cdot x)-y\cdot\rho_c(x)  & \mbox{\rm by hypotheses (d);} \\
\rho_c(\lambda_a(y))-\lambda_a(\rho_c(y))=\rho_c(\rho_a(y))-\rho_{a\cdot c}(y)) & \mbox{\rm by hypotheses (b);} \\
\rho_c(\rho_b(x))-\rho_{b\cdot c}(x))=\rho_c(\lambda_b(x))-\lambda_b(\rho_c(x)) & \mbox{\rm by hypotheses (b).}\end{array}$$
Summing up these equalities one gets Equality (\ref{c}).
\end{proof} 

Hence the theorem characterises the four properties that an action $(\lambda,\rho)$, that is, a pair of $k$-linear mappings $B\to\End(I_k)$, must have in order to construct the semidirect product of a pre-Lie $k$-algebra $B$ acting on a pre-Lie $k$-algebra $I$. 

\bigskip

\subsection{Bimodules over a pre-Lie algebra} The most important case of semidirect product is probably when the pre-Lie algebra $I$ is abelian, i.e., the case where the action, that is, the pair $(\lambda,\rho)$ of $k$-linear mappings $B\to\End(I_k)$, is an action of the pre-Lie $k$-algebra $B$ on a $k$-module $M$. In other words, when $I$ is a $B$-bimodule. Let us be more precise, giving the precise definition of what a bimodule over a pre-Lie algebra must be:

\begin{definition} Let $A$ be a pre-Lie $k$-algebra. A {\em bimodule} over $A$ is a $k$-module $M_k$ with a pair $(\lambda,\rho)$ of $k$-linear mappings $A\to\End(M_k)$ such that:

{\rm (a)} $\lambda\colon (A,\cdot)\to (\End(M_k),\circ)$ is a pre-morphism (that is, $M$ is a module over $A$).

{\rm (b)} $\rho_a\circ\lambda_b-\lambda_b\circ\rho_a=\rho_a\circ\rho_b-\rho_{b\cdot a}$ for every $a,b\in B$.\end{definition}

Notice that Conditions (c) and (d) of Theorem~\ref{vilu} are always trivially satisfied because in this case the $k$-module $M$ is viewed as an abelian pre-Lie algebra, that is, with null multiplication.
This definition already appears, for instance, in \cite{NAVW}. Notice the nice interpretation of condition (b) given in that paper: In condition (b) the left hand side $\rho_a\circ\lambda_b-\lambda_b\circ\rho_a$ describes how far the action is from associativity  (for bimodules over an associative algebra, it is always required to be zero); the right hand side $\rho_a\circ\rho_b-\rho_{b\cdot a}$ describes how far $\rho$ is from being a $k$-algebra antihomomorphism.

\subsection{Adjoining the identity to a pre-Lie algebra} The class of pre-Lie algebras contains the class of associative algebras. For associative algebras, it is very natural to consider associative algebras with an identity, and when there is not an identity, to adjoin one. This construction is often called the ``Dorroh extension''. Let's show that this is possible for pre-Lie algebras as well. We will see in fact that a more appropriate name for our class of algebras, instead of ``pre-Lie algebras'', would have been ``pre-associative algebras''. Adjoining an identity to a pre-Lie $k$-algebra $A$ is exactly our semidirect product of the pre-Lie $k$-algebra $k$ acting on the pre-Lie $k$-algebra $A$. Let's be more precise.

An {\em identity} in a pre-Lie $k$-algebra $A$ is an element, which we will denote by $1_A$, such that $a\cdot1_A=1_A\cdot a=a$ for every $a\in A$. If $A$ has an identity, we will say that $A$ is {\em unital}. An element $e$ of $A$ is {\em idempotent} if $e^2:=e \cdot e=e$. The zero of $A$ is always an idempotent element of $A$, and the identity, when it exists, is also an idempotent element of $A$.
	
	Let $A$ be any fixed pre-Lie $k$-algebra. Then the associative commutative ring $k$ is a pre-Lie $k$-algebra, and there is a one-to-one correspondence between the set of all the pre-Lie $k$-algebra morphisms $k\to A$ and the set of all idempotent elements of $A$. For any idempotent element $e$ of $A$ the corresponding morphism $\varphi_e\colon k\to A$ is defined by $\varphi_e(\lambda)=\lambda e$ for every $\lambda \in k$. Conversely, for any morphism $\varphi\colon k\to A$ the corresponding idempotent element of $A$ is $\varphi(1)$.

For any fixed pre-Lie $k$-algebra $A$ it is possible to construct the semidirect product of $k$ acting on $A$ via the pair $(\lambda,\rho)$ of $k$-module morphisms $k\to\End(A_k)$ for which $\lambda_\alpha=\rho_\alpha$ is multiplication by $\alpha$ for all $\alpha\in k$. Then the four conditions (a), (b), (c), (d) of Theorem~\ref{vilu} are all automatically satisfied, and the corresponding semidirect product is the $k$-module direct sum $A\oplus k$ with the multiplication defined by $$(x,\alpha)(y,\beta)=(x\cdot y+\beta x+\alpha y, \alpha\beta)$$ for every $(x,\alpha),(y,\beta)\in A\oplus k$. Hence $A\oplus k$ becomes a pre-Lie $k$-algebra with identity $(0,1)$. The Lie algebra sub-adjacent  this pre-Lie algebra $A\oplus k$ is the direct sum of the Lie algebra $(A,[-,-])$ and the abelian Lie algebra $k$. We will denote this semidirect product by $A\# k$.

\medskip

Now let  $\PreL_{k,1}$ be the category of all unital pre-Lie $k$-algebras. Its objects are the pre-Lie $k$-algebras $A$ with an identity. Its morphisms $f\colon A\to B$ are the $k$-algebra morphisms $f$ such that $f(1_A)=1_B$. There is also a further category involved. It is the category $\PreL_{k,1,a}$ of all unital pre-Lie $k$-algebras with an {\em augmentation}. Its objects are all the pairs $(A,\varepsilon_A)$, where $A$ is a unital pre-Lie $k$-algebra and $\varepsilon_A\colon A\to k$ is a morphism in $\PreL_{k,1}$ that is a left inverse for $\varphi_{1_A}$: $$\xymatrix@1{
k\ar[r]^{\varphi_{1_A}} &A\ar[r]^{\varepsilon_A} & k. }$$ The morphisms $f\colon (A,\varepsilon_A)\to (B,\varepsilon_{B})$ are the morphisms $f\colon A\to B$ in $\PreL_{k,1}$ such that $\varepsilon_{B}f=\varepsilon_A$. For instance, the $k$-algebra $A\# k$ is clearly a unital $k$-algebra with augmentation: the augmentation  is the canonical projection $\pi_2\colon A\# k=A\oplus k\to k$ onto the second summand.
	
	\bigskip
	
	It is easy to see that:
	
	\begin{theorem}\label{164} There is a category equivalence $F\colon \PreL_{k}\to \PreL_{k,1,a}$ that associates with any object $A$ of $\PreL_{k}$ the $k$-algebra with augmentation $F(A):=(A\# k, \pi_2)$. The quasi-inverse of $F$ is the functor $\PreL_{k,1,a}\to \PreL_{k,}$ that associates with each unital pre-Lie $k$-algebra with augmentation $(A,\varepsilon_A)$ the kernel $\ker(\varepsilon_A)$ of the augmentation. \end{theorem}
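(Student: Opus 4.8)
The plan is to exhibit the two functors explicitly and check that the round-trips are naturally isomorphic to the identities. First I would define $F\colon \PreL_k\to\PreL_{k,1,a}$ on objects by $F(A)=(A\#k,\pi_2)$, where $A\#k$ is the unital pre-Lie $k$-algebra constructed above via the semidirect product of $k$ acting on $A$, with identity $(0,1)$, and $\pi_2\colon A\oplus k\to k$ the projection onto the second coordinate; one checks $\pi_2$ is a left inverse of $\varphi_{1_{A\#k}}=\varphi_{(0,1)}$, since $\varphi_{(0,1)}(\lambda)=\lambda\cdot(0,1)=(0,\lambda)$ and $\pi_2(0,\lambda)=\lambda$, so $(A\#k,\pi_2)$ is indeed an object of $\PreL_{k,1,a}$. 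On a morphism $f\colon A\to B$ in $\PreL_k$ I would set $F(f)=f\oplus\id_k\colon A\oplus k\to B\oplus k$, $(x,\alpha)\mapsto(f(x),\alpha)$; a direct check using the multiplication $(x,\alpha)(y,\beta)=(x\cdot y+\beta x+\alpha y,\alpha\beta)$ shows $F(f)$ is a unital pre-Lie morphism, and $\pi_2\circ F(f)=\pi_2$, so $F(f)$ is a morphism in $\PreL_{k,1,a}$. Functoriality of $F$ is immediate.

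Next I would define the quasi-inverse $G\colon\PreL_{k,1,a}\to\PreL_k$ by $G(A,\varepsilon_A)=\ker(\varepsilon_A)$, which is an ideal of $A$ hence a pre-Lie $k$-algebra (without any requirement of an identity), and on morphisms by restriction: if $f\colon(A,\varepsilon_A)\to(B,\varepsilon_B)$ satisfies $\varepsilon_B f=\varepsilon_A$, then $f$ maps $\ker\varepsilon_A$ into $\ker\varepsilon_B$, and we set $G(f)=f|_{\ker\varepsilon_A}$. Functoriality is again immediate.

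Then I would produce the two natural isomorphisms. For $G\circ F\cong \id_{\PreL_k}$: given $A$, we have $G(F(A))=\ker(\pi_2\colon A\oplus k\to k)=A\oplus 0\cong A$, and the obvious isomorphism $A\to A\oplus 0$, $x\mapsto(x,0)$, is a pre-Lie morphism (from the multiplication formula, $(x,0)(y,0)=(x\cdot y,0)$) and is natural in $A$. For $F\circ G\cong\id_{\PreL_{k,1,a}}$: given $(A,\varepsilon_A)$, since $\varepsilon_A$ is a left inverse of $\varphi_{1_A}$ and $\PreL_k$ (hence the underlying category of $k$-modules) is semi-abelian, $A$ decomposes as a $k$-module as $A=\ker(\varepsilon_A)\oplus k\cdot 1_A$; explicitly every $a\in A$ is written uniquely as $a=(a-\varepsilon_A(a)1_A)+\varepsilon_A(a)1_A$ with the first summand in $\ker\varepsilon_A$. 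I would define $\eta_{(A,\varepsilon_A)}\colon \ker(\varepsilon_A)\#k\to A$ by $(x,\alpha)\mapsto x+\alpha 1_A$ and check it is a bijection, that it respects the multiplications (here one uses $1_A\cdot x=x\cdot 1_A=x$ and bilinearity to match $(x,\alpha)(y,\beta)=(x\cdot y+\beta x+\alpha y,\alpha\beta)$ with $(x+\alpha 1_A)(y+\beta 1_A)=x\cdot y+\beta x+\alpha y+\alpha\beta 1_A$), that it sends the identity to the identity, and that it is compatible with augmentations ($\varepsilon_A(x+\alpha 1_A)=\alpha=\pi_2(x,\alpha)$). Naturality in $(A,\varepsilon_A)$ is a routine diagram chase.

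The main obstacle — really the only nontrivial point — is verifying that the natural candidate map $\eta_{(A,\varepsilon_A)}$ is well-defined and bijective, i.e.\ that an object of $\PreL_{k,1,a}$ genuinely splits as $\ker(\varepsilon_A)\oplus k\cdot 1_A$ and that $1_A$ is $k$-linearly independent from $\ker\varepsilon_A$; this follows because $\varepsilon_A(1_A)=\varepsilon_A(\varphi_{1_A}(1))=1$, so $k\cdot 1_A\cap\ker\varepsilon_A=0$, and $a-\varepsilon_A(a)1_A\in\ker\varepsilon_A$ for all $a$, giving the direct-sum decomposition. Once this is in hand, everything else is a straightforward unwinding of the explicit multiplication formula for $A\#k$, and the two composites are seen to be naturally isomorphic to the respective identity functors, establishing the category equivalence.
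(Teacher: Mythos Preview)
Your argument is correct and is exactly the standard verification the paper has in mind: the paper does not actually supply a proof of this theorem beyond the phrase ``It is easy to see that'', so your write-up is filling in precisely the details one expects (defining $F$ and $G$ explicitly, and checking the two natural isomorphisms via the $k$-module splitting $A=\ker(\varepsilon_A)\oplus k\cdot 1_A$ coming from the section $\varphi_{1_A}$ of $\varepsilon_A$). There is nothing to compare; your approach is the intended one.
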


%%%%%%%%%%%%%%%%%%%%%%%% referenc.tex %%%%%%%%%%%%%%%%%%%%%%%%%%%%%%
% sample references
% %
% Use this file as a template for your own input.
%
%%%%%%%%%%%%%%%%%%%%%%%% Springer-Verlag %%%%%%%%%%%%%%%%%%%%%%%%%%
%
% BibTeX users please use
% \bibliographystyle{}
% \bibliography{}

\begin{thebibliography}{99.}
\bibitem{Reyes2} M. Ben-Zvi, A. Ma and M. Reyes, {\em A Kochen-Specker theorem for integer matrices and noncommutative spectrum functors},  J. Algebra {\bf 491} (2017), 28--313. 
\bibitem{tesi} M. Cerqua, ``Pre-Lie algebras'', Master Thesis in Math., University of Padua, 2022.
	\bibitem{BAI} Chengming Bai, {\em An Introduction to Pre-Lie Algebras}, In ``Algebra and Applications 1, Non-associative algebras and categories'', A. Makhlouf (Ed.), ISTE Ltd, 2020, pp. 245--273.
		\bibitem{CL} F. Chapoton and N. Livernet, {\em Pre-Lie algebras and the rooted trees operad}, International Mathematics Research Notices  {\bf 2001} (8) (2001), 395--408.
	%\bibitem{DB} D. Bourn, ``From Groups to Categorial Algebra - Introduction to Protomodular and Mal'tsev Categories'', Compact Teftbooks in Mathematics, 
	\bibitem{BB} F. Borceux and D. Bourn, ``Mal'cev, protomodular, homological and semi-abelian categories'', Mathematics and Its Applications {\bf 566}, Kluwer, 2004.
	
	\bibitem {F} A. Facchini, {\em Algebraic structures from the point of view of complete multiplicative lattices}, available at: http://arxiv.org/abs/2201.03295 , accepted for publication in ``Rings, Quadratic Forms, and their Applications in Coding Theory'', Contemporary Math., 2022.
	\bibitem{primes-mult} A. Facchini and C. A. Finocchiaro, {\em Multiplicative lattices: maximal implies prime, and related questions},  submitted for publication (2022).
	\bibitem{FFJ} A. Facchini, C. A. Finocchiaro and G. Janelidze, {\em Abstractly constructed prime spectra},  Algebra Universalis {\bf 83 }(2022), no. 1, Paper No. 8, 38 pp. 
\bibitem{Francesco} A. Facchini, F. de Giovanni and M. Trombetti, {\em Spectra of groups}, published online on 5 June 2022 in Algebr. Represent. Theory (2022).
\bibitem{Leila} A. Facchini and L. Heidari Zadeh, {\em Algebras with a bilinear form, and Idempotent endomorphisms,} submitted for publication, 2022, available at
http://arxiv.org/abs/2210.08230
\bibitem{15} G. Janelidze, and G. M. Kelly, {\em Central extensions in Mal'tsev varieties,} Theory Appl.
Categ. {\bf 7} (2000), 219--226.
	\bibitem{JMT} G. Janelidze, L. M\'arki and W. Tholen, {\em 
Semi-abelian categories},  J. Pure Appl. Algebra {\bf 168} (2002), no. 2--3, 367--386. 
%\bibitem{Bai} Guilai Liu and Chengming Bai, {\em Anti-pre-Lie algebras, Novikov algebras and commutative 2-cocycles on Lie algebras}, arXiv 
%https://doi.org/10.48550/arXiv.2207.06200
\bibitem{NAVW} A. Nijenhuis, {\em On a class of common properties of some different types of algebras}, Nieuw Arch. Wisk. (3) {\bf 17} (1969), 17--46. French translation in  Enseign. Math. (2) {\bf 14} (1968), 225--277.
\bibitem{Reyes} M.~Reyes, {\em Obstructing extensions of the functor $\Spec$  to noncommutative rings}, Israel J.~Math.~{\bf 192} (2012), 667--698. 
\bibitem{55} E. B. Vinberg, {\em  The theory of convex homogeneous cones}, Trudy Mosk. Mat. Obshch. {\bf 12} (1963), 303--358. English transl.: Trans. Mosc. Math. Soc.  {\bf 12 }(1963), 340--403.
\end{thebibliography}
%
\biblstarthook{}

\end{document}